\newtheorem{theorem}{Theorem}
\newtheorem*{theorem*}{Theorem}
\newtheorem{claim}{Claim}
\theoremstyle{definition}
\newtheorem{definition}{\sc Definition}
\newtheorem*{definition*}{\sc Definition}
\newtheorem*{remark*}{Remark}
\newtheorem*{remarks}{Remarks}
\newtheorem*{example*}{\bf Example}
\newtheorem*{examples}{\bf Examples}
\newcommand{\loc}{{\rm loc}}
\newcommand{\Real}{{\rm Re}}
\newcommand{\dotminus}{\mathbin{\text{\@dotminus}}}
\newcommand{\@dotminus}{%
  \ooalign{\hidewidth\raise1ex\hbox{.}\hidewidth\cr$\m@th-$\cr}%
}
\begin{document}

\fontsize{10.5pt}{4.5mm}\selectfont

\title {Remarks on parabolic Kolmogorov operator}

\author{D.\,Kinzebulatov} 

\address{Universit\'{e} Laval, D\'{e}partement de math\'{e}matiques et de statistique, Qu\'{e}bec, QC, Canada}

\email{damir.kinzebulatov@mat.ulaval.ca}

\author{Yu.\,A.\,Sem\"{e}nov}

\address{University of Toronto, Department of Mathematics, Toronto, ON, Canada}

\email{semenov.yu.a@gmail.com}

\thanks{The research of D.K.\,is supported by the NSERC grant (RGPIN-2024-04236)}

\begin{abstract}
We obtain gradient estimates on solutions to parabolic Kolmogorov equation with singular drift in a large class. Such estimates allow to construct a Feller evolution family, which is used to construct unique weak solution to the corresponding stochastic differential equation.
\end{abstract}

\subjclass[2010]{60H10, 47D07 (primary), 35J75 (secondary)}

\keywords{Parabolic equations, gradient estimates, Feller evolution family, stochastic equations}

\maketitle

\section{Introduction}

The paper is concerned with Sobolev regularity of solutions of the backward Kolmogorov equation
\begin{equation}
\label{kolm_eq}
\bigg(\partial_t-\Delta + b(t,x)\cdot \nabla\bigg)u=0,
\end{equation}
under some minimal assumptions on a locally unbounded vector field $b:\mathbb  R^{1+d} \rightarrow \mathbb R^d$, $d \geq 3$ (Theorem \ref{thm1}). These Sobolev regularity estimates allow us to construct, using a parabolic Moser-type iteration procedure, the corresponding to \eqref{kolm_eq} strongly continuous Feller evolution family. In turn, this Feller evolution family determines, for every $x \in \mathbb R^d$,
a unique in a large class weak solution to stochastic differential equation (SDE)
\begin{equation}
\label{seq}
X_t=x-\int_0^t b(s,X_s)ds + \sqrt{2}B_t, \quad B_t \text{ is a $d$-dimensional Brownian motion}, \quad x \in \mathbb R^d,
\end{equation}
see discussion and Theorems \ref{thm2}, \ref{thm3} below.

\medskip

The question of what minimal assumptions on a locally unbounded vector field $b$ provide various regularity properties of solutions of \eqref{kolm_eq} arises in many applications of parabolic partial differential equations, ranging from interacting particle systems to hydrodynamics. 
 It is practically impossible to survey the corresponding literature with any degree of completeness. We refer to the monograph \cite{BKRS}, see also discussion and results in \cite{NU}. That said, there are rather few results on the Sobolev regularity of $u$ i.e.\,estimates on $\|u\|_{L^\infty([0,1],W^{1,q})}$, where $W^{1,q}=W^{1,q}(\mathbb R^d)$ is the Sobolev space and $q>2$ is assumed to be large, despite their importance. Such estimates are the subject of this work.

\medskip

Perhaps the most popular condition on locally unbounded $b$ found in the literature on Kolmogorov equation \eqref{kolm_eq} is the Ladyzhenskaya-Prodi-Serrin class:
\begin{equation}
\label{LPS}
\tag{LPS}
|b| \in L^r(\mathbb R,L^p(\mathbb R^d)), \quad \frac{2}{r}+\frac{d}{p} \leq 1, \quad r \geq 2, \;\;p \geq d
\end{equation}
(furthermore, many results need more restrictive assumption $\frac{2}{r}+\frac{d}{p}<1$).
The \eqref{LPS} class is optimal in the sense that condition $\frac{2}{r}+\frac{d}{p} \leq 1$ cannot be replaced  by $\frac{2}{r}+\frac{d}{p} \leq 1+\varepsilon$ regardless of the smallness of $\varepsilon>0$  without destroying  H\"{o}lder continuity of solution $u$, not to mention its Sobolev regularity. That said, one can argue that the vector fields in the Ladyzhenskaya-Prodi-Serrin have rather mild singularities: given a vector field $b$ satisfying \eqref{LPS}, one can replace it in \eqref{kolm_eq} with $cb$  for an arbitrarily large constant $c>0$ without leaving the \eqref{LPS} class and without affecting the Sobolev regularity estimates on $u$. 

\medskip

In the present paper we consider a larger class of vector fields $b$. It contains some vector fields that are strictly more singular than the ones in \eqref{LPS}, such as
Hardy vector field
\begin{equation}
\label{hardy}
b(x)=\sqrt{\delta} \frac{d-2}{2}|x|^{-2}x, \quad \delta>0.
\end{equation}
Now, replacing $b$ by $cb$ for some $c>0$ or, equivalently, adjusting $\delta$, affects admissible $q>2$ in the estimates on $\|u\|_{L^\infty([0,1],W^{1,q})}$, see Theorem \ref{thm1}. In other words, the Sobolev regularity estimates on $u$ ``sense'' the change in the magnitude of the singularities of $b$. The latter is measured by constant $\delta$. 
Our focus is on describing the dependence of admissible $q$ in the estimates on $\|u\|_{L^\infty([0,1],W^{1,q})}$ on  $\delta$.

\medskip

Specifically, we consider the following class of vector fields:

\begin{definition}
A Borel measurable vector field $b:\mathbb R^{1+d} \rightarrow \mathbb R^d$ is said to be form-bounded if
$$|b| \in L^2_{\loc}(\mathbb R^{1+d})$$ 
and there exist constant $\delta>0$  and function $0\leq g=g_\delta\in L^1_\loc(\mathbb R)$ such that for all $t>0$
\begin{equation}
\label{fbd}
\int_0^t \|b(\tau) f(\tau) \|_2^2d\tau \leq \delta \int_0^t \|\nabla f(\tau)\|_2^2d\tau  + \int_0^t g(\tau) \|f(\tau) \|_2^2d\tau 
\end{equation}
for all $f \in \mathcal S(\mathbb R^{1+d})$.
\end{definition}

Here and everywhere below, $\mathcal S(\mathbb R^{1+d})$ is the Schwartz space of rapidly decreasing functions (Section \ref{notations}), and $$\|\varphi(\tau)\|^2_2:=\int_{\mathbb{R}^d}|\varphi(\tau,x)|^2dx, \quad \|\nabla \varphi(\tau)\|_2^2=\int_{\mathbb{R}^d}|\nabla_x \varphi (\tau,x)|^2dx.$$

We abbreviate \eqref{fbd} as $b\in \mathbf{F}_{\delta,g}$. The constant $\delta$ is called a form-bound of $b$.

\begin{examples}
1.~Every vector field in the \eqref{LPS} class belongs to $\mathbf{F}_{\delta,g}$ with $\delta$ that can be chosen arbitrarily small, as can be seen using Sobolev's embedding theorem and H\"{o}lder inequality. 

2.~The Hardy vector field \eqref{hardy} is in $\mathbf{F}_{\delta,0}$, as follows from the well known Hardy inequality $\||x|^{-1}\varphi\|_2 \leq \frac{4}{(d-2)^2}\|\nabla \varphi\|_2^2$, $\varphi \in W^{1,2}$ (this inequality is sharp, i.e.\,\eqref{hardy} is not in $\mathbf{F}_{\delta',g}$ for any $\delta'<\delta$ regardless of the choice of $g=g_{\delta'}$).

3.~Further, the class of form-bounded vector fields contains $L^\infty(\mathbb R,L^{d,\infty})$, where $L^{d,\infty}$ is the weak $L^d$ class. More generally, $L^{d,\infty}$ can be replaced with large scaling-invariant Morrey classes. In these classes, and hence in $\mathbf{F}_{\delta,g}$ there are $b=b(x)$ that are not in $L^{2+\varepsilon}_{\loc}$ for any $\varepsilon>0$. The proofs of these inclusions and some other examples can be found in \cite{Ki_survey, KiM, KiS_super,S}.

\end{examples}

The question of admissible values of form-bound $\delta$ is important, 
in particular, in the light of the following counter-example and result:

\medskip

(a) Let $b$ be the Hardy  drift \eqref{hardy}.
If $$\delta>4\bigg(\frac{d}{d-2}\bigg)^2,$$ then SDE \eqref{seq} with initial point $x=0$ does not have a weak solution, see e.g.\,proof in \cite{BFGM}. Informally, in the struggle between the diffusion and the drift the latter starts to have an upper hand.

\medskip

(b) On the other hand, SDE \eqref{seq} with $b \in \mathbf{F}_{\delta,g}$ (which includes \eqref{hardy}) has a unique in a large class weak solution for every $x \in \mathbb R^d$ provided that $\delta$ is sufficiently small. See \cite{KiM} or Theorem \ref{thm3} below.

\medskip

It should be added that SDE \eqref{seq2} with $b \in \mathbf{F}_{\delta,g}$ has a martingale solution for every initial point $x \in \mathbb R^d$ provided only 
\begin{equation}
\label{delta_4}
\delta<4, 
\end{equation}
see \cite{KiS_sharp}. 
Theorems \ref{thm2}, \ref{thm3} below, on the other hand, contain a more detailed weak well-posedness result for \eqref{seq} although this comes at expense of requiring smaller $\delta$. The proofs of both of these theorems use the gradient bounds of Theorem \ref{thm1}. 

\medskip

In the time-homogeneous case $b=b(x)$, analogous to Theorem \ref{thm1} gradient estimates on solution of elliptic equation $(\mu-\Delta + b \cdot \nabla v)=f$ were obtained in \cite{KS}. These gradient estimates allowed the authors of \cite{KS} to verify conditions of the Trotter approximation theorem in $C_\infty$ (continuous functions on $\mathbb R^d$ vanishing at infinity endowed with the $\sup$-norm) using a Moser-type iteration procedure, thus obtaining the corresponding to \eqref{kolm_eq} strongly continuous Feller semigroup in $C_\infty$.
Working in the elliptic setting to construct the Feller semigroup required in \cite{KS} a less restrictive assumption on $\delta$ than the one in Theorem \ref{thm1}, i.e.\,only
$$\delta<1 \wedge \left(\frac{2}{d-2}\right)^2.$$ 
In the proof of Theorem \ref{thm1} we employ the test function of \cite{KS} which seems to give the least restrictive conditions on $\delta$. 
\medskip

Generally speaking, the time-inhomogeneous case $b=b(t,x)$, i.e.\,when the vector field can have strong singularities both in time and in the spatial variables, presents the next level of difficulty compared to the time-homogeneous case since many elliptic instruments, such as deep results of semigroup theory, are no longer available.
The assertion of Theorem \ref{thm1} for $q>d$ close to $d$ (except the bound on the time derivative $\partial_tu$) and Theorems \ref{thm2}, \ref{thm3}, were proved under more restrictive assumption 
\begin{equation}
\label{osaka}
\delta<\frac{1}{d^2}
\end{equation}
 in \cite{Ki,KiM}. In \cite{Ki}, the construction of the Feller evolution family for \eqref{kolm_eq} with time-inhomogeneous form-bounded $b \in \mathbf{F}_{\delta,g}$ used a direct argument via a parabolic extension of the Moser-type iteration procedure employed in \cite{KS}, and thus did not require the Trotter approximation theorem. 

\medskip

In the present paper we improve these results: strengthen gradient bounds and relax the assumptions on $\delta$, and somewhat simplify the iteration procedure used to construct the Feller evolution family for \eqref{kolm_eq}.
In particular, the proof of gradient estimates in the present paper  (Theorem \ref{thm1}) profits from the presence of the time derivative $\partial_tu$ instead of eliminating it as in \cite{Ki}. In a sense, this refined proof of gradient estimates is truly parabolic, which allows us to impose more general assumptions on $\delta$ than in \cite{Ki}. Let us add that the control over $\partial_tu$ is a non-trivial matter if one is searching for the least restrictive assumptions on $\delta$.
As a by-product of the new proof of gradient estimates,  we obtain a new integral estimate on $\partial_t u$, see remark 2 after Theorem \ref{thm1}. The proof of Theorem \ref{thm2} is somewhat simpler than the proof of the corresponding result in \cite{Ki} because it uses interpolation inequalities differently.

\medskip

There exist counterparts of Theorems \ref{thm1}-\ref{thm3} for a larger class of singular vector fields $b$ than $\mathbf{F}_{\delta,g}$, see \cite{Ki_Morrey}. But in that approach one needs smaller $\delta$ than in the present paper. 
 Let us also mention the results in \cite{Kr} on similar gradient bounds for non-divergence form operators, with $b$ that also can have critical-order singularities, which the author of \cite{Kr} applied to establish well-posedness of the corresponding SDEs. Some gradient bounds of the same type were used in \cite{BFGM} to prove well-posedness of the stochastic transport equation with drift in the \eqref{LPS} class; we refer also to \cite{KSS} where the stochastic transport equation with form-bounded drift was treated.

\medskip

Let us return to the almost sharp condition $\delta<4$ (we refer to (a), (b) above). After this paper was written, the critical value $\delta=4$ for form-bound $b=b(x)$ was reached in \cite{Ki_Orlicz} at the level of semigroup theory, energy inequality and the existence of unique weak solution to equation \eqref{kolm_eq} considered on a compact Riemannian manifold, see also \cite{KiS_Feller_4} where an extension of these results to $\mathbb R^d$ was obtained, which required careful work with appropriate weights. Also in \cite{KiS_Feller_4}, we constructed the corresponding to \eqref{kolm_eq} strongly continuous Feller semigroup in $C_\infty$ with form-bounded $b=b(x)$, now for all $\delta<4$. The proof uses De Giorgi's method and the Trotter approximation theorem. However, it is not clear at the moment how to obtain a parabolic counterpart of the argument of \cite{KiS_Feller_4} that would allow to treat time-inhomogeneous form-bounded $b$.

\medskip

In the present paper we work in dimensions $d \geq 3$. In dimensions $d=1,2$ the Sobolev embedding theorem is stronger, so from this point of view these dimensions are simpler. However, it is also true that in dimensions $d=1,2$ there is no appropriate Hardy inequality for $b(x)=\frac{x}{|x|^2}$; in fact, this vector field is not even in $L^2_{\loc}$ in these dimensions. So, it is not an element of the class of form-bounded vector fields (the approach of \cite{Ki_Morrey} mentioned above can be used to address this issue in dimension $d=2$). 

\subsection{Notations} 
\label{notations}

1. Let $X$, $Y$ be two Banach spaces. We write $$T=s\mbox{-} Y \mbox{-}\lim_n T_n$$ for bounded linear operators $T$, $T_n \in \mathcal B(X,Y)$ if $$\lim_n\|Tf- T_nf\|_Y=0 \quad \text{ for every $f \in X$}.
$$ 

\medskip

2.~For a given Borel measurable function $f=f(t,y)$, $(t,y) \in \mathbb R \times \mathbb  R^{d}$, define mollifiers $$E^d_\varepsilon f(\tau,x)=(e^{\varepsilon\Delta_d}f)(\tau,x)=(4\pi \varepsilon)^{-\frac{d}{2}}\int_{\mathbb R^d}e^{-\frac{|x-y|^2}{4\varepsilon}}f(\tau,y)dy, \quad \varepsilon>0,$$
i.e.\,we apply heat semigroup on $\mathbb R^d$ in the spatial variables,
and 
$$E^1_\varepsilon f(\tau,x)=(e^{\varepsilon\Delta_1} f)(\tau,x)=\frac{1}{\sqrt{4\pi \varepsilon}}\int_{\mathbb R}e^{-\frac{|\tau-s|^2}{4\varepsilon}}f(s,x)ds,$$
i.e.\,we apply the one-dimensional heat semigroup in the time variable.  
It is easy to see that
 $$E_\varepsilon^{1+d}=E_\varepsilon^1E_\varepsilon^d,$$
where the left-hand side is the mollifier obtained by applying the heat semigroup on $\mathbb R^{1+d}$.

\medskip

3.~Let $\mathcal S(\mathbb R^{1+d})$ denote the Schwartz space of rapidly decreasing functions, i.e.\,functions in $C^\infty(\mathbb R^{d+1})$ which tend to zero at infinity, together with all their derivatives, faster than any reciprocal power of $|t|+|x|$, $(t,x) \in \mathbb R^{1+d}$.

\medskip

4.~For a given $b \in \mathbf{F}_{\delta,g}$, we fix a regularizing sequence $\{b_n\}\subset [L^\infty(\mathbb R^{1+d})\cap C^\infty(\mathbb R^{1+d})]^d$ of vector fields such that, for any $0<t<\infty$,
\smallskip

(\textit{i}) $$\lim_{n\rightarrow \infty} \|b_n-b \|_{L^2([0,t] \times K)}=0, \quad \text{ for every compact $K\subset \mathbb R^d$};
$$

\smallskip

(\textit{ii}) there is a sequence of functions $0 \leq g_n \subset L^1_\loc(\mathbb R)$ such that $\sup_n\int_{0}^t g_n(\tau)d\tau = c_\delta(t)<\infty;$

\smallskip

(\textit{iii}) for all $n \geq 1$, $$\int_{0}^t\|b_n(\tau)f (\tau)\|_2^2d\tau \leq \delta\int_0^t \|\nabla f(\tau)\|_2^2d\tau+\int_0^t g_n(\tau)\|f(\tau)\|_2^2d\tau \quad (f \in \mathcal S(\mathbb R^{1+d})).$$

\medskip

The collection of all such sequences will be denoted by $[b]^r$. 

\medskip

We provide one possible construction of a $\{b_n\} \in [b]^r$ in Section \ref{approx_sect}.

\medskip

\section{Main results}
Our first result concerns the classical solutions to the Cauchy problem
\begin{equation}
\label{cauchy}
\bigg(\partial_\tau - \Delta + b_n \cdot\nabla_x\bigg)u(\tau) = 0, \quad 0\leq s<\tau<\infty,\quad x\in\mathbb{R}^d, \quad u(s)=h \in C_c^1(\mathbb{R}^d).
\end{equation}

\begin{theorem}
\label{thm1} Let $b \in \mathbf{F}_{\delta,g}$.
Assume that $q>d$ and $\delta>0$ satisfy the following constraints:
\[
\sqrt{\delta}<\bigg(\sqrt{q-1}-\frac{q-2}{2}\bigg)\frac{2}{q} \quad \text{ if } q=d+\varepsilon, \qquad d=3,4,
\]
where $0<\varepsilon\leq 2(\sqrt{2}-1)$,
\[
\sqrt{\delta}<(1-\mu)\frac{q-1}{q-2}\frac{1}{q} \quad \text{ if } q=d+\varepsilon, \qquad d\geq 5,
\]
where $0<\varepsilon\leq 1$, $0<\mu<1$, $16\mu>(1-\mu)^4\frac{(q-1)^2}{(q-2)^4}$ (we discuss below some sufficient conditions for these inequalities to hold). 
Let $ \{b_n\} \in [b]^r$ be a regularizing sequence for $b$, and let $u=u_n$ be the classical solution to Cauchy problem \eqref{cauchy}. Then there are constants $C_i=C_i(q,d,\delta)>0$, $i=1,2$, independent of $n$, such that, for all $0\leq s<t<\infty$, 
\begin{equation*}
\sup_{s \leq \tau \leq t } \|\nabla u(\tau)\|_q^q +  C_1 \int_s^{t} \big\|\nabla u(\tau)\big\|_{qj}^q d \tau \leq e^{C_2c_\delta(t)} \|\nabla u(s)\|_q^q, \quad j:=\frac{d}{d-2}.
\end{equation*}
\end{theorem}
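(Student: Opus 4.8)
The plan is to run a parabolic energy (Moser-type) argument for $\tau\mapsto\|\nabla u(\tau)\|_q^q$, $u=u_n$, whose dissipation simultaneously controls the supremum in $\tau$ and, after Sobolev's embedding applied to $\phi:=|\nabla u|^{q/2}$, the space--time term $\int_s^t\|\nabla u\|_{qj}^q$; the drift will enter only through the form-bound \eqref{fbd}, which keeps all constants dependent on $q,d,\delta$ alone. First I would record that, since $b_n\in C^\infty\cap L^\infty$ and $h\in C^1_c$, classical parabolic theory gives $u=u_n\in C^\infty((s,\infty)\times\mathbb R^d)$ with $u,\nabla u,\nabla^2 u,\partial_\tau u$ decaying fast enough in $x$ to justify all integrations by parts below, $\tau\mapsto\|\nabla u(\tau)\|_q^q$ absolutely continuous on $(s,t]$, and $\phi(\tau)\in W^{1,2}(\mathbb R^d)$. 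Then I would differentiate \eqref{cauchy} in $x$, test with $|\nabla u|^{q-2}\partial_k u$ and sum in $k$ --- equivalently, write $\frac1q\frac{d}{d\tau}\|\nabla u\|_q^q=-\int\partial_\tau u\,\mathrm{div}(|\nabla u|^{q-2}\nabla u)\,dx$ and substitute $\partial_\tau u=\Delta u-b_n\cdot\nabla u$ together with $\mathrm{div}(|\nabla u|^{q-2}\nabla u)=|\nabla u|^{q-2}\Delta u+(q-2)|\nabla u|^{q-3}\,\nabla u\cdot\nabla|\nabla u|$ --- to reach an identity
\[
\frac1q\frac{d}{d\tau}\|\nabla u(\tau)\|_q^q+\mathcal D(\tau)=\mathcal R_n(\tau),
\]
where a Bochner-type integration by parts identifies $\mathcal D=\int|\nabla u|^{q-2}|\nabla^2 u|^2+(q-2)\int|\nabla u|^{q-2}|\nabla|\nabla u||^2$, and $\mathcal R_n$ collects the two terms carrying $b_n$, schematically $\int(b_n\cdot\nabla u)|\nabla u|^{q-2}\Delta u$ and $(q-2)\int(b_n\cdot\nabla u)|\nabla u|^{q-3}\,\nabla u\cdot\nabla|\nabla u|$. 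Crucially, following \cite{KS} I would keep $\partial_\tau u$ (equivalently the $\int|\nabla u|^{q-2}|\Delta u|^2$ contribution) in play rather than eliminate it as in \cite{Ki}: this supplies additional usable dissipation without paying a dimensional constant, and is what makes the argument ``truly parabolic'' and relaxes the admissible $\delta$.

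Next I would estimate $\mathcal R_n$. Bounding $|b_n\cdot\nabla u|\le|b_n||\nabla u|$ and pairing each term of $\mathcal R_n$ with a portion of $\mathcal D$ by Cauchy--Schwarz produces, up to constants depending only on $q,d$, the scalar $\|b_n\phi\|_2^2=\int|b_n|^2|\nabla u|^q\,dx$. By \eqref{fbd} --- legitimate for $\phi(\tau)\in W^{1,2}$ by a density/mollification argument (using the operators of Section \ref{notations} and a truncation in $\tau$), and applied in its time-integrated form after integrating the energy identity above in $\tau$ --- this is at most $\delta\|\nabla\phi\|_2^2+g_n(\tau)\|\phi\|_2^2=\frac{\delta q^2}{4}\int|\nabla u|^{q-2}|\nabla|\nabla u||^2+g_n(\tau)\|\nabla u\|_q^q$. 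Optimizing the Young weights, the $b_n$-terms are absorbed into $\mathcal D$ leaving a strictly positive multiple of $\int|\nabla u|^{q-2}|\nabla|\nabla u||^2=\frac{4}{q^2}\|\nabla\phi\|_2^2$ precisely under the stated constraints on $\sqrt\delta$; the split into the regimes $d=3,4$ and $d\ge5$, and the auxiliary parameter $\mu$ with its admissibility condition $16\mu>(1-\mu)^4\frac{(q-1)^2}{(q-2)^4}$, reflect the different ways of apportioning $\mathcal D$ between the two $b_n$-terms in this optimization. The outcome, after integrating in $\tau$, is
\[
\frac1q\|\nabla u(\tau)\|_q^q+c_0\int_s^\tau\|\nabla\phi(\sigma)\|_2^2\,d\sigma\le\frac1q\|\nabla u(s)\|_q^q+c_1\int_s^\tau g_n(\sigma)\|\nabla u(\sigma)\|_q^q\,d\sigma,\qquad s\le\tau\le t,
\]
for some $c_0,c_1>0$ depending only on $q,d,\delta$.

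To finish, I would invoke Sobolev's inequality $\|\nabla\phi\|_2^2\ge S_d^{-2}\|\phi\|_{2d/(d-2)}^2=S_d^{-2}\|\nabla u\|_{qj}^q$ (with $j=\frac{d}{d-2}$ and $S_d$ the Sobolev constant), drop the dissipation and apply Grönwall to $y(\tau):=\|\nabla u(\tau)\|_q^q$ to get $\sup_{s\le\tau\le t}y(\tau)\le e^{qc_1c_\delta(t)}y(s)$ (using $\int_s^t g_n\le\int_0^t g_n\le c_\delta(t)$), then reinsert this into the last display to bound $\int_s^t\|\nabla\phi\|_2^2$, hence $\int_s^t\|\nabla u\|_{qj}^q$, by a constant times $e^{qc_1c_\delta(t)}y(s)$. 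Adding the two estimates and relabelling yields the asserted inequality with $C_1,C_2>0$ depending only on $q,d,\delta$, independent of $n$ since $\delta$ and $c_\delta(t)=\sup_n\int_0^t g_n$ are.

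The main obstacle is the estimate of $\mathcal R_n$. With no hypothesis whatsoever on $\nabla b$, the derivative of the drift in the $x$-differentiated equation must be moved onto $u$ by integration by parts, generating terms $\sim b_n\,|\nabla u|^{q-2}\nabla^2 u\,\nabla u$; requiring these to be absorbed by the \emph{merely bounded} dissipation $\mathcal D$, not by an $\varepsilon$-small fraction of it, is what forces the delicate tracking of constants behind the hypotheses on $\delta$, and it is exactly the extra dissipation carried by the $\partial_\tau u$ (equivalently $\Delta u$) term --- the device of \cite{KS} --- that makes the stated range reachable; controlling $\partial_\tau u$ is, as the introduction stresses, the non-trivial point. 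A secondary, purely technical, obstacle is the rigorous justification of applying \eqref{fbd} with $f=\phi\notin\mathcal S(\mathbb R^{1+d})$ and of the $\tau$-regularity needed for Grönwall, both settled by the mollifiers of Section \ref{notations}, a density argument, and a truncation in $\tau$.
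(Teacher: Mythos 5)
Your proposal follows essentially the same route as the paper: testing with $-\nabla\cdot(|\nabla u|^{q-2}\nabla u)$ to get the identity $q^{-1}\partial_\tau\|\nabla u\|_q^q+I_q+(q-2)J_q=X_q$, re-expressing $\Delta u$ as $\partial_\tau u+b_n\cdot\nabla u$ so as to keep the $\partial_\tau u$-dissipation in play, applying the form-bound to $|\nabla u|^{q/2}$, absorbing via Young, and closing with the Sobolev inequality and Gr\"{o}nwall. The one piece you assert rather than carry out is the algebra producing the two stated constraints --- in particular, for $d\ge 5$ the paper obtains the better $\mu$-dependent condition by first writing $\Real\langle b_n\cdot\nabla u,|\nabla u|^{q-2}\Delta u\rangle\le\langle|\Delta u|^2,|\nabla u|^{q-2}\rangle^{1/2}B_q^{1/2}$ with $B_q:=\langle|b_n\cdot\nabla u|^2,|\nabla u|^{q-2}\rangle$ and then bounding $\langle|\Delta u|^2,|\nabla u|^{q-2}\rangle$ separately from the equation --- but this is precisely the ``apportioning of the dissipation'' you describe, so the approach is the same.
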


\begin{remarks}

1. In particular, the assertion of Theorem \ref{thm1} is valid for

\smallskip

{\rm(a)} $\sqrt{\delta}=\frac{1.8}{d}$, $q=d+\frac{1}{48}$ for $d=3$. Or $\sqrt{\delta}=\frac{1.4}{d}$, $q=d + 0.014$ for $d=4$.

\medskip

{\rm(b)} $\sqrt{\delta}=\frac{1}{d}$, $q=d+1$ for $d\geq 5$ (note that the result in \cite{Ki} under the assumption $\sqrt{\delta}<\frac{1}{d}$ allows to only take $q=d+\varepsilon$ for $\varepsilon>0$ sufficiently small).

\medskip

${\rm(b^\prime)}$ $\sqrt{\delta}= (1-\frac{a}{16+a})\frac{q-1}{q-2}\frac{1}{q}$, $a=\frac{(q-1)^2}{(q-2)^4}$, $q=d+\varepsilon$ for all $\varepsilon\in ]0,1]$ for $d\geq 5$. This is a slightly rougher bound on admissible $\delta$ than the one in Theorem \ref{thm1}, but it is easy to verify. 

\medskip

Let us further compare the assumptions on $\delta$ in Theorem \ref{thm1} and in the analogous result in \cite{Ki}. Let us denote
$$
c_{\rm o}(q):=\frac{1}{q^2}, \quad c_{\rm n}(q):=\biggl((1-\frac{a}{16+a})\frac{q-1}{q-2}\frac{1}{q}\biggr)^2.
$$
Then the assumption on $\delta$ in \cite{Ki} (cf.\,\eqref{osaka}) and the mentioned in $\rm (b^\prime)$ somewhat rougher than necessary assumption on $\delta$ allowing to treat $q=d+\varepsilon$ in Theorem \ref{thm1} become, respectively,
$$
\delta\leq c_{\rm o}(d+\varepsilon), \quad \delta \leq c_{\rm n}(d+\varepsilon).
$$
Now one can see right away that
$$
c_{\rm o}(d+\varepsilon)<c_{\rm n}(d+\varepsilon) \text{ for all } \varepsilon \in ]0,1],
$$
In fact, a stronger result is valid: $c_{\rm o}(d)<c_{\rm n}(d+\varepsilon)$, $\varepsilon \in ]0,1]$.

\medskip

In Figure \ref{fig1} we plot the graph of $\frac{c_n(d+\varepsilon)}{c_o(d+\varepsilon)}$ against dimension $d$ for $\varepsilon>0$ taken to be small. In this plot we extended the definitions of the upper bounds on admissible $\delta$, i.e.\,$c_{\rm o}(d+\varepsilon)$, $c_{\rm n}(d+\varepsilon)$, to $d=3,4$ using \cite{Ki} and Theorem \ref{thm1}.
\begin{center}
\begin{figure}[h]
\label{fig1}
\includegraphics[scale=0.55]{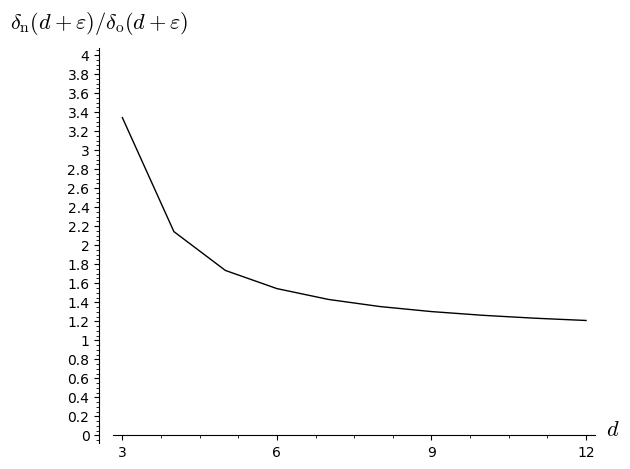}
\caption{\label{fig1}}
\end{figure}
\end{center}

\medskip

2. In the assumptions of Theorem \ref{thm1} we actually obtain a stronger regularity estimate: 
\begin{align*}
\sup_{s \leq \tau \leq t } \|\nabla u(\tau)\|_q^q &  + c_0\int_s^t\||\nabla u(\tau)|^\frac{q-2}{2}  \partial_\tau u(\tau)\|_2^2 d\tau + c_1\sum_{i=1}^d \int_s^t \big\langle |\nabla_i \nabla u(\tau) |^2, |\nabla u_n(\tau)|^{q-2} \big\rangle d \tau \\
& \leq e^{c_2(t)} \|\nabla u(s)\|_q^q.
\end{align*}
\end{remarks}

\begin{theorem}
\label{thm2}
Let $\{b_n\} \in [b]^r\subset \mathbf{F}_{\delta,g}$ with $\delta$  and $q>d$ satisfying the assumptions of Theorem \ref{thm1}. Let $u_n$ be the classical solution to Cauchy problem \eqref{cauchy}.
For every $n=1,2,\dots$, define operators $U_n^{t,s} \in \mathcal B(C_\infty)$, $0\leq s\leq t<\infty$, by
$$
U_n^{t,s}f:=u_n(t).
$$
Then the limit
$$
U^{t,s}:=s\mbox{-}C_\infty(\mathbb R^d)\mbox{-}\lim_{n}U_n^{t,s} \quad \text{(uniformly in $0 \leq s < t\leq 1$)}
$$
exists and determines a Feller evolution family on $\{(t,s)\in\mathbb R^2_+\mid 0\leq t-s<\infty\}\times C_\infty(\mathbb R^d)$.
\end{theorem}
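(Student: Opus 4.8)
The plan is to realize the Feller evolution family $U^{t,s}$ as a limit of the approximating classical propagators $U_n^{t,s}$, using the gradient bounds of Theorem \ref{thm1} to extract uniform control and equicontinuity. First I would record the elementary a priori bounds on $u_n$ coming from the maximum principle: since $b_n$ is smooth and bounded, the classical solution of \eqref{cauchy} satisfies $\|u_n(t)\|_\infty\leq\|h\|_\infty$, so each $U_n^{t,s}$ is a contraction on $L^\infty$, and in fact $U_n^{t,s}\mathbf 1=\mathbf 1$, $U_n^{t,s}f\geq 0$ for $f\geq 0$; moreover $U_n^{t,r}U_n^{r,s}=U_n^{t,s}$ for $s\leq r\leq t$ by uniqueness of the Cauchy problem. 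Thus the only things that need passing to the limit are: (1) that $U_n^{t,s}f$ converges in $C_\infty$ for each fixed $f$ in a dense set, uniformly in $0\leq s<t\leq 1$; (2) that the limit maps $C_\infty$ into $C_\infty$ (i.e. preserves decay at infinity and continuity); and (3) strong continuity $U^{t,s}f\to f$ as $t\downarrow s$. Properties (1)--(3) together with the algebraic relations above give a Feller evolution family.

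For (1), the key is an equicontinuity estimate in $C_\infty$ that is uniform in $n$. Here one invokes Theorem \ref{thm1}: starting from $h\in C_c^1$, the bound $\sup_{s\leq\tau\leq t}\|\nabla u_n(\tau)\|_q^q\leq e^{C_2 c_\delta(t)}\|\nabla h\|_q^q$ with $q>d$, combined with the parabolic gain term $C_1\int_s^t\|\nabla u_n(\tau)\|_{qj}^q\,d\tau$ and a parabolic Moser-type iteration (exactly the scheme alluded to in the introduction and used in \cite{Ki,KS}), upgrades the $L^\infty([s,t],W^{1,q})$ bound, $q>d$, to a uniform modulus of continuity for $u_n$ on $\mathbb R^d$ via Morrison/Sobolev embedding $W^{1,q}\hookrightarrow C^{0,1-d/q}$, and propagates smallness of $|u_n|$ near spatial infinity (since $h$ has compact support, so does the ``mass'' to leading order, and the $L^\infty\cap W^{1,q}$ bounds force $u_n(\tau,x)\to 0$ as $|x|\to\infty$ uniformly in $n$ and $\tau\leq 1$). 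This gives a family $\{U_n^{\cdot,\cdot}h\}$ that is uniformly bounded and equicontinuous as a map into $C_\infty$, uniformly for $0\leq s<t\leq 1$; an Arzelà--Ascoli / diagonal argument produces a convergent subsequence. To get convergence of the full sequence (not just a subsequence) one shows the limit is independent of the choice of regularizing sequence $\{b_n\}\in[b]^r$: given two such sequences, form the difference $w_n=u_n-\tilde u_n$, which solves a parabolic equation with right-hand side $(b_n-\tilde b_n)\cdot\nabla\tilde u_n$; using the form-boundedness of $b_n,\tilde b_n$ and the already-established uniform $W^{1,q}$ bounds on $\tilde u_n$ together with property (i) of $[b]^r$ ($\|b_n-b\|_{L^2([0,t]\times K)}\to 0$), an energy estimate shows $w_n\to 0$ in the relevant norm. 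This identifies the limit $U^{t,s}$ unambiguously and yields convergence of the whole sequence.

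Properties (2) and (3) then follow: $U^{t,s}$ is a bounded operator on $C_\infty$ because it is a $C_\infty$-limit of contractions and the limit inherits positivity, $U^{t,s}\mathbf 1\leq\mathbf 1$ (with equality if $g$ allows, else sub-Markov), and the chain rule $U^{t,r}U^{r,s}=U^{t,s}$; density of $C_c^1$ in $C_\infty$ extends everything from $h\in C_c^1$ to all of $C_\infty$. Strong continuity $\|U^{t,s}f-f\|_\infty\to 0$ as $t\downarrow s$ is obtained first for $f\in C_c^1$ from the uniform-in-$n$ modulus of continuity of $\tau\mapsto u_n(\tau)$ in $C_\infty$ (which the Moser iteration delivers jointly in the space-time variables, or alternatively from the integral bound on $\partial_\tau u_n$ in remark 2 after Theorem \ref{thm1}), and then for general $f\in C_\infty$ by a standard $3\varepsilon$-argument using the uniform boundedness of $U^{t,s}$.

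The main obstacle I anticipate is step (1): extracting from the single $W^{1,q}$-with-gain estimate of Theorem \ref{thm1} a genuine $C_\infty$-equicontinuity that is uniform in $n$ \emph{and} uniform in the pair $0\leq s<t\leq 1$, i.e. making the parabolic Moser iteration quantitative and $n$-free, and in particular controlling the behavior of $u_n$ near $|x|=\infty$ (decay at infinity is not automatic from interior regularity and must be propagated). This is precisely the point where the gain term $\int_s^t\|\nabla u_n\|_{qj}^q\,d\tau$ with the genuinely parabolic exponent $j=d/(d-2)$ is essential; everything else is a fairly standard, if technical, limiting argument.
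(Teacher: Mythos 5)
Your overall architecture (a priori contraction/positivity/chain-rule properties of $U_n^{t,s}$ plus convergence on a dense set) is sound, but it differs from the paper's, which does not use compactness at all: the paper proves directly that $\{u_n\}$ is Cauchy, first in $L^\infty([s,t],L^r)$ for $r>\tfrac{2}{2-\sqrt\delta}$ (Claim \ref{cla1}, a localization argument valid for all $\delta<4$), and then in $L^\infty([s,t]\times\mathbb R^d)$ via a quantitative Moser iteration applied to $h=u_n-u_m$, yielding $\|h\|_{\infty,\infty}\le B\,\|h\|_{r_0,r_0}^{\gamma}$ with $\gamma>0$; the gradient bound of Theorem \ref{thm1} enters only through the source term $(b_n-b_m)\cdot\nabla u_n$, estimated with $\|\nabla u_n\|_{2x}$, $2x=q>d$. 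Your qualitative Arzel\`a--Ascoli route could in principle replace the second step, but not the first.

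The genuine gap is the claim that the $L^\infty\cap W^{1,q}$ bounds ``force $u_n(\tau,x)\to0$ as $|x|\to\infty$ uniformly in $n$ and $\tau$.'' They do not: Theorem \ref{thm1} controls only $\|\nabla u_n(\tau)\|_q$, and a function with $\|u\|_\infty\le\|h\|_\infty$ and small gradient in $L^q$ can be bounded away from zero on arbitrarily large sets (e.g.\ a plateau of height $\|h\|_\infty$ far from the support of $h$); no uniform-in-$n$ integrability of $u_n$ itself is available a priori, because the singular drift could in principle transport mass to infinity at an $n$-dependent rate. This uniform decay is exactly what part (a) of Claim \ref{cla1} establishes, and it is not soft: it requires the weighted cutoff $\zeta=\eta(|x-o|/R)$ with $\eta(t)=((t/k)-1)^k$, the form-boundedness of $b_n$ to absorb $\langle b_n\cdot\tfrac{\nabla\zeta}{\zeta},v^r\rangle$ into $\|\nabla v^{r/2}\|_2^2$ (this is where $r>\tfrac{2}{2-\sqrt\delta}$ is used), and the interpolation $\langle\tfrac{|\nabla\zeta|^2}{\zeta^2},|v|^r\rangle\le\tfrac{2\theta}{r}c^{r/2\theta}R^{d-r/\theta}\|f\|_\infty^r+\tfrac{r-2\theta}{r}\|v\|_r^r$, which the paper explicitly flags as the estimate ``on which the whole proof rests.'' The same localization difficulty reappears, unaddressed, in your identification step: since $b_n-\tilde b_n\to0$ only in $L^2_{\mathrm{loc}}$, the energy estimate for $w_n=u_n-\tilde u_n$ must again be cut off in space, and one additionally needs the uniform local bound $\int_s^t\langle\zeta|\nabla u_n|^2\rangle\,d\tau\lesssim L(R)\|f\|_\infty^2$, which the paper derives separately. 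Until you supply the uniform decay at infinity, your Arzel\`a--Ascoli argument only yields convergence uniformly on compacta, which is not convergence in $C_\infty(\mathbb R^d)$.
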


\smallskip

\begin{remarks}
1.~In the proof of Theorem \ref{thm2}, we establish the following result that is important on its own: if $b \in \mathbf{F}_\delta$ with $\delta<4$, then solutions $\{u_n\}_{n \geq 1}$ constitute a Cauchy sequence in $L^\infty([0,1],L^r)$, $\frac{2}{2-\sqrt{\delta}}<r<\infty$.  (This result appeared previously only in the time-homogeneous case $b=b(x)$ \cite{KiS_super}.)

\medskip

2.~Theorem \ref{thm2} is proved using the following inequality:
\[
\|u_n-u_m\|_{L^\infty([0,1] \times \mathbb R^d)}\leq B \|u_n-u_m\|_{L^r([0,1] \times \mathbb R^d)}^\gamma, \quad B<\infty, \gamma>0, \quad r>\frac{2}{2-\sqrt{\delta}}.
\]
Its proof uses an iteration procedure that depends on the gradient bound of Theorem \ref{thm1}. The main concern of the iteration procedure is the strict positivity of exponent $\gamma$. 

\medskip

3.~The evolution family $\{U^{t,s}\}_{0 \leq s \leq t<\infty}$ does not depend on the choice of concrete regularization $\{b_n\} \in [b]^r$. In this sense, the ``approximation solution'' $u(t)=U^{t,s}f$  is unique. Moreover, it is readily seen that, for $f \in C_\infty \cap L^2$, $u(t)$ is a weak solution to Cauchy problem $\partial_t-\Delta +b\cdot\nabla=0$, $u(s)=f$ in the usual sense, and that it satisfies the gradient estimates in Theorem \ref{thm1} if $f \in C_\infty \cap W^{1,q}$. 
\end{remarks}

Theorem \ref{thm1} can be extended to non-homogeneous parabolic equation with form-bounded right-hand side, moreover, the corresponding gradient estimates can be localized. This, together with Theorem \ref{thm2}, allows to prove the following result by arguing as in \cite{KiM}:

\begin{theorem}
\label{thm3}
Let $b \in \mathbf{F}_{\delta,g}$ satisfy the conditions of Theorem \ref{thm1}. Then there exist probability measures $\mathbb P_x$, $x \in \mathbb R^d$ on $(C([0,T],\mathbb R^d),\sigma(\omega_r \mid 0 \leq r \leq t))$, where $\omega_t$ is the coordinate process, satisfying
$$
\mathbb E_x[f(\omega_r)]=P^{0,r}f(x), \quad 0 \leq r \leq T, \quad f \in C_\infty(\mathbb R^d),
$$
where $P^{t,r}(b):=U^{T-t,T-r}(\tilde{b})$, $\tilde{b}(t,x)=b(T-t,x)$, such that $\mathbb P_x$ is a weak solution to SDE 
\begin{equation}
\label{seq2}
X_t=x-\int_0^t b(s,X_s)ds + \sqrt{2}B_t.
\end{equation}
Moreover $\mathbb P_x$ is unique in a large class of weak solutions (see \cite{KiM}).
\end{theorem}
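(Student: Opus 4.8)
\emph{Plan.} The strategy is to turn the backward Feller evolution family $\{U^{t,s}\}$ of Theorem \ref{thm2} into a forward propagator, read off from it a consistent family of transition probabilities, invoke Kolmogorov's extension theorem (and path regularisation) to produce the measures $\mathbb P_x$, and finally identify $\mathbb P_x$ with a weak solution of \eqref{seq2} through the associated martingale problem. I would follow the scheme of \cite{KiM}; the only extra analytic ingredient beyond Theorems \ref{thm1}--\ref{thm2} is the non-homogeneous, localisable refinement of Theorem \ref{thm1} alluded to in the text, used to control the drift along the trajectories.

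\emph{Step 1 (forward propagator and kernels).} Form-boundedness is invariant under the time reversal $\tilde b(t,x):=b(T-t,x)$ (the inequality \eqref{fbd} is unchanged after $\tau\mapsto T-\tau$, with $\tilde g(\tau)=g(T-\tau)$ and the same $\delta$), so Theorem \ref{thm2} applies to $\tilde b$ and produces $P^{t,r}:=U^{T-t,T-r}(\tilde b)\in\mathcal B(C_\infty)$, $0\le r\le t\le T$, with $P^{r,r}=\Id$, the Chapman--Kolmogorov identity $P^{t,r}P^{r,s}=P^{t,s}$, and strong continuity in $(t,r)$; positivity preservation and contractivity descend to the limit from the classical $u_n$. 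Conservativeness $P^{t,r}\mathbf 1=\mathbf 1$ is obtained by testing against an increasing sequence $h_R\in C_c^1$, $h_R\uparrow 1$, together with the uniform estimates of Theorems \ref{thm1}--\ref{thm2} and the maximum principle for the regularised problems. Riesz representation then gives probability kernels $P^{t,r}(x,dy)$ with $P^{t,r}f(x)=\int f\,dP^{t,r}(x,\cdot)$, Chapman--Kolmogorov makes $\{P^{0,r}(x,\cdot)\}_{0\le r\le T}$ a consistent family, and Kolmogorov's extension theorem yields a measure $\mathbb P_x$ on $((\mathbb R^d)^{[0,T]},\sigma(\omega_r\mid 0\le r\le T))$ with exactly the finite-dimensional distributions in the statement, hence $\mathbb E_x[f(\omega_r)]=P^{0,r}f(x)$.

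\emph{Step 2 (path continuity via approximation).} To place $\mathbb P_x$ on $C([0,T],\mathbb R^d)$ I would use the approximating diffusions $X^n$ solving \eqref{seq2} with $b$ replaced by $b_n$; their laws $\mathbb Q_x^n\in\mathcal P(C([0,T],\mathbb R^d))$ have $f$-marginals $U_n^{\cdot,0}f$, so by Theorem \ref{thm2} the finite-dimensional distributions of $\mathbb Q_x^n$ converge to those of $\mathbb P_x$. Tightness of $\{\mathbb Q_x^n\}$ on $C([0,T],\mathbb R^d)$ follows from the Aldous/Kolmogorov criteria once one has, uniformly in $n$,
$$
\sup_n\,\mathbb E_x^n\!\int_0^T|b_n(r,X^n_r)|\,dr<\infty, \qquad \sup_n\,\mathbb E_x^n\!\int_s^t|b_n(r,X^n_r)|\,dr\to 0 \ \ (t-s\to 0),
$$
which one derives by applying the form-bound \eqref{fbd} to $\sqrt{\rho^n_r}$, where $\rho^n_r$ is the density of $X^n_r$, after bounding $\int_0^T\|\nabla\sqrt{\rho^n_r}\|_2^2\,dr$ by the energy estimate accompanying the dual, non-homogeneous version of Theorem \ref{thm1}, and using $\int\rho^n_r=1$. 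Thus $\mathbb Q_x^n\Rightarrow\mathbb P_x$ along a subsequence, $\mathbb P_x$ is carried by $C([0,T],\mathbb R^d)$, and its marginals are those of Step 1.

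\emph{Step 3 (weak solution and uniqueness).} For $f\in C_c^\infty(\mathbb R^d)$ and bounded $\sigma(\omega_u\mid u\le s)$-measurable $\Phi$ one has $\mathbb E_x^n\big[\big(f(X^n_t)-f(X^n_s)-\int_s^t(\Delta f-b_n\cdot\nabla f)(r,X^n_r)\,dr\big)\Phi\big]=0$. Passing to the limit, the first two terms converge by weak convergence and continuity, and the drift term converges because $b_n\to b$ in $L^2_{\loc}(\mathbb R^{1+d})$ (property $(i)$ of $[b]^r$) while the occupation densities $\rho_r^n$ are uniformly integrable on compacts — once more a consequence of the form-bound energy estimate. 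Hence $f(\omega_t)-f(\omega_s)-\int_s^t(\Delta-b\cdot\nabla)f(\omega_r)\,dr$ is a $\mathbb P_x$-martingale; in particular $\int_0^t|b(r,\omega_r)|\,dr<\infty$ $\mathbb P_x$-a.s., the process $M_t:=\omega_t-x+\int_0^t b(r,\omega_r)\,dr$ is a continuous $\mathbb P_x$-martingale with $\langle M^i,M^j\rangle_t=2t\,\delta_{ij}$, and Lévy's characterisation (on a possibly enlarged space) gives $M_t=\sqrt2\,B_t$, i.e.\ $\mathbb P_x$ solves \eqref{seq2}. Uniqueness in the large class is inherited from \cite{KiM}: any weak solution whose time-marginal densities lie in the pertinent $L^p$-class has, by a duality/Gronwall argument against the dual equation, $f$-marginals equal to $P^{0,r}f(x)$ and therefore coincides with $\mathbb P_x$.

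\emph{Main obstacle.} The crux is the drift: giving meaning to $\int_0^t b(r,\omega_r)\,dr$ and proving convergence of the drift terms, both of which reduce to the uniform-in-$n$ bound $\sup_n\mathbb E_x^n\int_0^T|b_n(r,X^n_r)|\,dr<\infty$ (and its small-increment refinement). This is precisely where the localisable non-homogeneous extension of Theorem \ref{thm1} — and the smallness of $\delta$ — enters, via the energy estimate for the forward Kolmogorov equation coupled with form-boundedness; the remainder is a standard weak-convergence and martingale-problem argument as in \cite{KiM}.
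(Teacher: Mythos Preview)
Your proposal is correct and matches the paper's own treatment: the paper does not give a self-contained proof of Theorem~\ref{thm3} but simply states that it follows from Theorem~\ref{thm2} together with the localised, non-homogeneous extension of Theorem~\ref{thm1}, ``arguing as in \cite{KiM}''; your sketch is a faithful reconstruction of that scheme (time-reversal to a forward propagator, approximating diffusions, tightness, passage to the limit in the martingale problem, uniqueness by duality).

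The one place where you deviate slightly is the mechanism behind the uniform drift bound in Step~2. The route in \cite{KiM}, which the paper invokes, goes through Krylov-type estimates obtained by duality from the localised gradient bounds on solutions of the \emph{non-homogeneous backward} equation $(\partial_\tau-\Delta+b_n\cdot\nabla)v=|b_n|\mathbf 1_K$; this directly controls $\mathbb E_x^n\!\int_0^T|b_n|(r,X_r^n)\,dr$ for every $x$. Your alternative via the Fisher information $\int_0^T\|\nabla\sqrt{\rho_r^n}\|_2^2\,dr$ of the forward density also works in principle, but it requires extra care because $\rho_0^n=\delta_x$ is singular, so the energy estimate must be run from a positive time and combined with short-time heat-kernel bounds. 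Either approach closes the argument; the paper's is cleaner precisely because it avoids this initial singularity.
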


\bigskip

\section{Construction of a regularizing sequence $\{b_n\} \in [b]^r$}

\label{approx_sect}

We use notations introduced in Section \ref{notations}.
Set
\[
b_n(\tau,x):=E^{1+d}_{\varepsilon_n}(\mathbf{1}_{Q_n}b)(\tau,x), \qquad Q_n=[0,n]\times
 B_d(0,n).
\]
Select $\{\varepsilon_n\}\downarrow 0$ from the requirement $\lim_n\int_0^t\|E^{1+d}_{\varepsilon_n}(\mathbf{1}_{Q_n}b)(\tau)-(\mathbf{1}_{Q_n}b)(\tau)\|_2^2d\tau=0$. 

Note that $|E\phi|\leq\sqrt{E|\phi|^2}$, $|E(\phi\psi)|\leq \sqrt{E|\phi|^2}\sqrt{E|\psi|^2}$. We have (for a.e. $t>0$)
\begin{align*}
|E^d_{\varepsilon_n} \mathbf{1}_{B_d(0,n)} b(\tau,x)|^2 & \leq \langle e^{\varepsilon_n\Delta_d}(x,\cdot)\mathbf{1}_{B_d(0,n)}|b(\tau,\cdot)|^2\rangle \leq \delta \|\nabla_x\sqrt{E_{\varepsilon_n}^d(x,\cdot)}\|_2^2+g(\tau)\\
&=(4\varepsilon_n)^{-1}\delta\big\langle\frac{|x-\cdot|^2}{4\varepsilon_n}e^{\varepsilon_n\Delta_d}(x,\cdot)\big\rangle	+ g(\tau) \leq C(d)\varepsilon_n^{-1}\delta+g(\tau).
\end{align*}
Thus $|E_{\varepsilon_n}^{1+d}(\mathbf{1}_{Q_n}b)(\tau,x)|\leq \sqrt{C(d) \delta}\varepsilon_n^{-\frac{1}{2}}+\sqrt{E^1_{\varepsilon_n}(\mathbf 1_{[0,n]}g)(\tau)}$ an so $|b_n|\in L^\infty(\mathbb{R}^{1+d})$. It is  clear that $b_n$ are smooth.

Next, for $f\in\mathcal{S}(\mathbb{R}^{1+d})$, $\int_0^t\|b_nf\|_2^2=\int_0^t\|b_nf_t\|_2^2$, where  $f_t(\tau,x):=\mathbf{1}_{[0,t]}f(\tau,x)$, and
\begin{align*}
\int_0^t\|b_n f\|_2^2 &\leq\int_0^t \langle E^1_{\varepsilon_n}(\mathbf 1_{[0,n]}b^2),E^{d}_{\varepsilon_n} |f_t|^2\rangle \leq \int_{\mathbb{R}}\langle \mathbf 1_{[0,n]}b^2,E^{1+d}_{\varepsilon_n} |f_t|^2\rangle\\
& \leq \delta\int_0^n\|\nabla\sqrt{E^{1+d}_{\varepsilon_n} |f_t|^2}\|_2^2 + \int_0^n gE^1_{\varepsilon_n}\langle E^d_{\varepsilon_n} |f_t|^2\rangle \\
&\leq\delta\int_0^n E^1_{\varepsilon_n}\langle E^d_{\varepsilon_n}|\nabla |f_t||^2\rangle + \int_0^n g E^1_{\varepsilon_n}\langle E^d_{\varepsilon_n} |f_t|^2\rangle\\
& \leq\delta\int_0^n E^1_{\varepsilon_n}\|\nabla |f_t|\|_2^2 + \int_0^n g E^1_{\varepsilon_n}\|f_t\|_2^2.
\end{align*}
\begin{align*}
&\int_0^n E^1_{\varepsilon_n}\|\nabla |f_t|\|_2^2=\int_{\mathbb{R}} \mathbf{1}_{[0,n]} E^1_{\varepsilon_n}(\mathbf{1}_{[0,t]}\|\nabla |f|\|_2^2)\\
&=\int_{\mathbb{R}} (E^1_{\varepsilon_n}\mathbf{1}_{[0,n]}) \mathbf{1}_{[0,t]}\|\nabla |f|\|_2^2 \leq \int_0^t \|\nabla f\|_2^2.
\end{align*}
\begin{align*}
\int_0^n gE^1_{\varepsilon_n}\|f_t\|_2^2 \leq \int_0^t (E^1_{\varepsilon_n} g)\|f\|_2^2.
\end{align*}
Therefore
\[
\int_0^t\|b_nf\|_2^2\leq \delta\int_0^t\|\nabla f\|_2^2 +\int_0^tg_n\|f\|_2^2, \qquad g_n(\tau):=E^1_{\varepsilon_n}g(\tau).
\]
It is seen now that $\{b_n\} \in [b]^r$ assuming that $g \in L^1 + L^\infty$.

\begin{remarks}
1. In the definition of $\mathbf F_{\delta,g}$ we require only $0\leq g\in L^1_\loc(\mathbb{R})$. If we define $b_n:=\mathcal E^1_{\varepsilon_n}E^d(\mathbf{1}_{Q_n}b)$, where $\mathcal E^1_{\varepsilon_n}$ denotes K.\,Friedrichs mollifier, then arguing as above one readily concludes that $\{b_n\} \in [b]^r$ with $g_n:=\mathcal E^1_{\varepsilon_n}(\mathbf 1_{[-\varepsilon_n,t+\varepsilon_n]}g)$, $\sup_n\int_0^t g_n(\tau)d\tau\leq \int_{-1}^{t+1}g(\tau)d\tau$.

\medskip

2. A construction of $\{b_n\}$ of this type without the indicator function in the spatial variables was considered in \cite{Ki_survey, KiS_MA}.
\end{remarks}

\bigskip

\section{Proof of Theorem \ref{thm1}}

\label{proof_thm1_sect}

\begin{proof}
Denote $w=\nabla_x u(\tau,x)$, $\phi = - \nabla \cdot (w |w|^{q-2}) \equiv - \sum_{i=1}^d \nabla_i (w_i |w|^{q-2})$.
Since $b_n$ is smooth and bounded, we can multiply the equation \eqref{cauchy} by $\bar{\phi}$ and integrate by parts to obtain
\begin{equation}
\label{id}
q^{-1}\partial_\tau \|w\|_q^q + I_q + (q-2)J_q = X_q, 
\end{equation}
where
$$
I_q := \sum_{i=1}^d \big\langle |\nabla w_i |^2, |w|^{q-2} \big\rangle, \quad J_q := \big\langle |\nabla |w| |^2, |w|^{q-2} \big\rangle, \quad X_q :=\Real \langle b_n \cdot w, \nabla \cdot \big( w |w|^{q-2} \big) \rangle.
$$

\textbf{1}.~ Case $d=3$, $d=4$. Clearly, $X_q =\Real \langle b_n \cdot w, |w|^{q-2} \Delta u \rangle +(q-2)\Real \langle b_n \cdot w, |w|^{q-3} w \cdot \nabla |w| \rangle,$  
\begin{align*}
\Real \langle b_n \cdot w, |w|^{q-2} \Delta u \rangle & = \Real\langle b_n \cdot w, |w|^{q-2} (\partial_\tau u + b_n \cdot w )\rangle \\
 & = B_q + \Real\langle b_n \cdot w, |w|^{q-2} \partial_\tau u \rangle \qquad (B_q:=\langle |b_n\cdot w|^2,|w|^{q-2}\rangle)\\
 & = B_q + \Real \langle (-\partial_\tau u  + \Delta u), |w|^{q-2} \partial_\tau u \rangle \\
 & = B_q - \langle |\partial_\tau u|^2, |w|^{q-2} \rangle - q^{-1} \partial_\tau \|w\|_q^q -(q-2)\Real \langle |w|^{q-3} w \cdot \nabla|w|, \partial_\tau u \rangle ; \\
\Real\langle b_n \cdot w, |w|^{q-2} \Delta u \rangle & \leq  B_q - \langle |\partial_\tau u|^2, |w|^{q-2} \rangle - q^{-1} \partial_\tau \|w\|_q^q + (q-2) J_q^\frac{1}{2} \langle |\partial_\tau u|^2, |w|^{q-2} \rangle^\frac{1}{2} \\
 & \leq - q^{-1} \partial_\tau \|w\|_q^q + B_q + \frac{(q-2)^2}{4} J_q \\
 & \leq - q^{-1}\partial_\tau \|w\|_q^q + \bigg[\frac{q^2 \delta}{4}+\frac{(q-2)^2}{4}\bigg] J_q +g_n(\tau)\|w\|_q^q ;
\end{align*}
\begin{align*}
|\langle b_n \cdot w, |w|^{q-3} w \cdot \nabla |w| \rangle| & \leq B_q^\frac{1}{2} J_q^\frac{1}{2} \leq \bigg[\frac{1}{4 \varepsilon} B_q + \varepsilon J_q \bigg] \\
& \leq \bigg[\frac{1}{4 \varepsilon} \frac{q^2 \delta}{4} + \varepsilon \bigg]J_q + \frac{g_n(\tau)}{4 \varepsilon} \|w\|_q^q \\
& = \frac{q \sqrt{\delta}}{2} J_q + \frac{g_n(\tau)}{q \sqrt{\delta}} \|w\|_q^q \quad \quad (\varepsilon = \frac{q \sqrt{\delta}}{4} ).
\end{align*}
Thus $$X_q \leq - \frac{1}{q} \partial_\tau \|w\|_q^q + \bigg[\frac{q^2 \delta}{4}+\frac{(q-2)^2}{4} + (q-2) \frac{q \sqrt{\delta}}{2} \bigg] J_q + \big(\frac{q-2}{q\sqrt{\delta}} + 1 \big) g_n(\tau) \|w\|_q^q,$$ and hence (taking in to account $I_q\geq J_q$)
\[
\frac{2}{q}\partial_\tau\|w\|_q^q + \bigg[q -1 - \frac{q^2 \delta}{4} - \frac{(q-2)^2}{4} - (q-2) \frac{q \sqrt{\delta}}{2} \bigg] J_q \leq \bigg(\frac{q-2}{q\sqrt{\delta}} + 1 \bigg) g_n(\tau) \|w\|_q^q .
\]
Set $\mu_\tau:=\frac{q}{2}\big(\frac{q-2}{q\sqrt{\delta}}+1\big)\int_s^\tau g_n(r)dr$, so
\[
\frac{2}{q}\partial_\tau\big(e^{-\mu_\tau}\|w(\tau)\|_q^q\big) + \bigg[q -1 - \frac{q^2 \delta}{4} - \frac{(q-2)^2}{4} - (q-2) \frac{q \sqrt{\delta}}{2} \bigg] e^{-\mu_\tau} J_q(\tau) \leq 0.  \tag{$\star$} 
\]
It is readily seen that
\[
q -1 - \frac{q^2 \delta}{4} - \frac{(q-2)^2}{4} - (q-2) \frac{q \sqrt{\delta}}{2} > 0
\]
is equivalent to
\[
\sqrt{\delta}<\bigg(\sqrt{q-1}-\frac{q-2}{2}\bigg)\frac{2}{q} \text{ for } q=d+\varepsilon, \; 3\leq d\leq 6, \; 0<\varepsilon\leq 2(\sqrt{2}-1),
\]
and holds for $\sqrt{\delta}= \frac{1,8}{d}$, $q=d+ \frac{1}{48}$, $d=3$, and for $\sqrt{\delta}=\frac{1,4}{d}$, $q=d+0,014$, $d=4$. 

Finally, using the uniform Sobolev inequality and the bound $\int_0^tg_n\leq c_\delta (t)$, we obtain from $(\star)$
\[
\sup_{s \leq r \leq t } \|w(r)\|_q^q + C_1 \int_s^t \|w(\tau)\|_{qj}^q d \tau \leq e^{C_2c_\delta(t)}\|\nabla u(s)\|_q^q, \quad C_1>0.
\]
Here we have used that $ U_n^{s_1,s}u(s) = e^{(s_1-s)\Delta}u(s) - \int_s^{s_1} U_n^{s_1,\tau} b_n \cdot \nabla e^{(\tau-s)\Delta} u(s) d \tau$ and, for $s_1-\tau\leq 1$,
\[
\| \nabla U_n^{s_1,\tau} \|_{q \to q} \leq \frac{c_n}{\sqrt{s_1-\tau}},  \; \; \| \nabla \int_s^{s_i} U_n^{s_1,\tau} b_n \cdot \nabla e^{(\tau-s) \Delta} u(s) d \tau \|_q \leq 2 c_n \|b_n\|_\infty \sqrt{s_1-s} \|\nabla u(s)\|_q,
\]
 so that $\lim_{s_1 \downarrow s} \| \nabla U_n^{s_1,s} u(s)\|_q = \lim_{s_1 \downarrow s}\| \nabla e^{(s_1-s)\Delta}u(s)\|_q = \|\nabla u(s)\|_q.$

\smallskip

\textbf{2}. Case $d \geq 5$. Now we estimate the term $X_q^\prime :=\Real \langle b_n \cdot w, |w|^{q-2} \Delta u \rangle$ as follows.
\begin{align*}
X_q^\prime & = \Real\langle - \partial_\tau u + \Delta u, |w|^{q-2} \Delta u \rangle \\
& = \langle |\Delta u|^2, |w|^{q-2} \rangle - \Real\langle \partial_\tau u , |w|^{q-2} \Delta u \rangle,
\end{align*}
\begin{align*}
X_q^\prime & =\Real \langle b_n \cdot w, |w|^{q-2} (\partial_\tau u + b_n \cdot w) \rangle \\
& = B_q + \Real\langle b_n \cdot w, |w|^{q-2} \partial_\tau u \rangle.
\end{align*}
Thus,
\begin{align*} 
\langle |\Delta u|^2, |w|^{q-2} \rangle & = B_q + \Real\langle \partial_\tau u, |w|^{q-2} ( b_n \cdot w + \Delta u  )\rangle \\
& = B_q + \Real\langle \partial_\tau u, |w|^{q-2} ( - \partial_\tau u + 2 \Delta u  )\rangle\\
& = B_q - \langle |\partial_\tau u|^2, |w|^{q-2} \rangle + 2\Real \langle \partial_\tau u,|w|^{q-2} \Delta u \rangle \\
& = B_q - \langle |\partial_\tau u|^2, |w|^{q-2} \rangle - \frac{2}{q}\partial_\tau\|w\|_q^q -2 (q-2)\Real \langle \partial_\tau u,|w|^{q-3} w \cdot \nabla |w| \rangle \\
& \leq B_q - \langle |\partial_\tau u|^2, |w|^{q-2} \rangle - \frac{2}{q}\partial_\tau\|w\|_q^q + (q-2)^2 J_q + \langle |\partial_\tau u|^2,|w|^{q-2} \rangle \\
& = B_q - \frac{2}{q}\partial_\tau\|w\|_q^q + (q-2)^2 J_q;
\end{align*}

\begin{align*}
X_q^\prime & \leq \langle |\Delta u|^2, |w|^{q-2} \rangle^\frac{1}{2} B_q^\frac{1}{2} \leq \epsilon \langle |\Delta u|^2, |w|^{q-2} \rangle + \frac{1}{4 \epsilon} B_q \\
& \leq - \frac{2 \epsilon}{q}\partial_\tau \|w\|_q^q + \bigg(\epsilon + \frac{1}{4 \epsilon} \bigg) B_q + (q-2)^2 \epsilon J_q \\
& \leq - \frac{2 \epsilon}{q}\partial_\tau \|w\|_q^q + \bigg(\frac{q^2 \delta}{4} \epsilon + \frac{1}{4 \epsilon} \frac{q^2 \delta}{4} + (q-2)^2 \epsilon \bigg) J_q + \bigg(\epsilon + \frac{1}{4 \epsilon}\bigg) g_n(\tau) \|w\|_q^q\\
& (\text{here we put } \epsilon = \frac{q \sqrt{\delta}}{4} \bigg(\frac{q^2 \delta}{4}+ (q-2)^2 \bigg)^{-\frac{1}{2}})\\
& = - \frac{2 \epsilon}{q}\partial_\tau\|w\|_q^q + \frac{q \sqrt{\delta}}{2} \sqrt{\frac{q^2 \delta}{4}+ (q-2)^2} \; J_q + \bigg(\epsilon + \frac{1}{4 \epsilon}\bigg) g_n(\tau) \|w\|_q^q.
\end{align*}
Note that $X_q=X_q^\prime+X_q^{\prime\prime}$, $X_q^{\prime\prime}=(q-2)\Real\langle b_n\cdot w,|w|^{q-3}w\cdot\nabla|w|\rangle$. Estimating $X_q^{\prime\prime}$ as in Step \textbf{1}, $X_q^{\prime\prime}\leq(q-2)\bigg(\frac{q\sqrt{\delta}}{2}J_q+\frac{g_n}{q\sqrt{\delta}}\|w\|_q^q\bigg)$, we have

\[
X_q \leq - \frac{2 \epsilon}{q}\partial_\tau\|w\|_q^q + \frac{q \sqrt{\delta}}{2} \bigg( \sqrt{\frac{q^2 \delta}{4}+ (q-2)^2}  + q-2 \bigg)\; J_q + \bigg(\epsilon + \frac{1}{4 \epsilon} + \frac{q-2}{q \sqrt{\delta}} \bigg) g_n(\tau)\|w\|_q^q.
\]
Finally,
\begin{align*}
\frac{1 + 2 \epsilon}{q} \partial_\tau\|w\|_q^q &+ \bigg[ q-1 - \frac{q \sqrt{\delta}}{2} \bigg( \sqrt{\frac{q^2 \delta}{4}+ (q-2)^2} + q-2 \bigg) \bigg ] J_q\\
& \leq \bigg(\epsilon + \frac{1}{4 \epsilon} + \frac{q-2}{q \sqrt{\delta}} \bigg)g_n(\tau) \|w\|_q^q . 
\end{align*}
We are left to show that
\[
 q-1 - \frac{q \sqrt{\delta}}{2} \bigg( \sqrt{\frac{q^2 \delta}{4}+ (q-2)^2} + q-2 \bigg) > 0. \tag{$\star'$}
\]
in the assumptions (b) and ($b^\prime$).

Let $d\geq 3$, $\sqrt{\delta}=\frac{1}{d}$, $q=d+1$. It is seen that $(\star')$ is equivalent to $d>1$.

Let $d\geq 5$. Set $\sqrt{\delta}=(1-\mu)\frac{q-1}{q-2}\frac{1}{q}$, $0<\mu<1$. Then $(\star')$ takes the form
\[
q-1 -(1-\mu)\frac{q-1}{2} > (1-\mu)\frac{q-1}{2}\sqrt{1+\frac{q^2}{4(q-2)^2}(1-\mu)^2\frac{(q-1)^2}{(q-1)^2}\frac{1}{q^2}}.  
\]
The latter is equivalent to
\[
16\mu>(1-\mu)^4\frac{(q-1)^2}{(q-2)^4}
\]
which clearly follows from $16\mu\geq(1-\mu)\frac{(q-1)^2}{(q-2)^4}$. In turn the latter is equivalent to
\[
\mu\geq\frac{a}{16+a},\quad a=\frac{(q-1)^2}{(q-2)^4}.
\]
Finally, set $\mu=\frac{a}{16+a}$. 
\end{proof}
\medskip

\begin{remarks} 1. $(\star')$ fails if $d\geq 3$, $\sqrt{\delta}=\frac{1}{d}$ and $q>d+1$. 

2. It is seen that $\frac{1}{d}<(1-\frac{a}{16+a})\frac{q-1}{q-2}\frac{1}{q}$ for $q=d+\varepsilon$ and all $0<\varepsilon\leq 1$. $(\star')$ still holds for $\mu=1+\frac{8}{a} - \sqrt{(1+\frac{8}{a})^2-1}$ ($<\frac{a}{16+a}$).
\end{remarks}

\bigskip

\section{Proof of Theorem \ref{thm2}}

\begin{claim}
\label{cla1}
Let $u_n$ be the classical solution to \eqref{cauchy}. Then $\{u_n\}$ is a Cauchy sequence in $L^\infty([s,t],L^r(\mathbb{R}^d))$ for every $r\in ]\frac{2}{2-\sqrt{\delta}},\infty[$.  
\end{claim}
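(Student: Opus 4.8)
The plan is to run a weighted $L^r$-energy estimate for the difference $v=v_{n,m}:=u_n-u_m$ and to show that the inhomogeneity it creates vanishes as $n,m\to\infty$; the role of $r>\frac{2}{2-\sqrt\delta}$ is precisely to make the drift term of $v$ absorbable. Since $b_n,b_m$ are smooth and bounded, $v$ is a classical solution of $(\partial_\tau-\Delta+b_n\cdot\nabla)v=(b_m-b_n)\cdot\nabla u_m$, $v(s)=0$. For $r\geq 2$ I would multiply by $|v|^{r-2}v$, integrate over $\mathbb{R}^d$, and use $|v|^{r-2}v\,\nabla v=\frac2r|v|^{r/2}\nabla|v|^{r/2}$ and $\langle|\nabla v|^2,|v|^{r-2}\rangle\geq\frac{4}{r^2}\|\nabla|v|^{r/2}\|_2^2$ to get
\[
\tfrac1r\partial_\tau\|v\|_r^r+\tfrac{4(r-1)}{r^2}\|\nabla|v|^{r/2}\|_2^2\leq\tfrac2r\|b_n|v|^{r/2}\|_2\|\nabla|v|^{r/2}\|_2+\big|\langle(b_m-b_n)\cdot\nabla u_m,|v|^{r-2}v\rangle\big|.
\]
Estimating $\frac2r\|b_n|v|^{r/2}\|_2\|\nabla|v|^{r/2}\|_2\leq\frac2r(\frac1{4\beta}\|b_n|v|^{r/2}\|_2^2+\beta\|\nabla|v|^{r/2}\|_2^2)$ with the optimal choice $\beta=\frac{\sqrt\delta}2$, and then invoking the form-bound $\int_s^\tau\|b_n|v|^{r/2}\|_2^2\leq\delta\int_s^\tau\|\nabla|v|^{r/2}\|_2^2+\int_s^\tau g_n\|v\|_r^r$ (with $f=|v|^{r/2}$), I would obtain, after integrating in time,
\[
\tfrac1r\|v(\tau_0)\|_r^r+\kappa\int_s^{\tau_0}\|\nabla|v|^{r/2}\|_2^2\,d\tau\leq\tfrac{1}{r\sqrt\delta}\int_s^{\tau_0}g_n\|v\|_r^r\,d\tau+\mathcal R_{n,m}(\tau_0),
\]
where $\kappa:=\frac{4(r-1)}{r^2}-\frac{2\sqrt\delta}{r}=\frac{2(r(2-\sqrt\delta)-2)}{r^2}$ is $>0$ exactly because $r>\frac{2}{2-\sqrt\delta}$, and $\mathcal R_{n,m}(\tau_0):=\int_s^{\tau_0}|\langle(b_m-b_n)\cdot\nabla u_m,|v|^{r-2}v\rangle|\,d\tau$ is non-decreasing in $\tau_0$.

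To exploit this I will collect several uniform-in-$n$ bounds on $u_n$. By the maximum principle $\|u_n(\tau)\|_\infty\leq\|h\|_\infty$, hence $\|v\|_{L^\infty([s,t]\times\mathbb{R}^d)}\leq 2\|h\|_\infty=:M$. Testing \eqref{cauchy} with $u_n$, absorbing the drift via $b_n\in\mathbf{F}_{\delta,g_n}$ (the smallness of $\delta$ under the hypotheses of Theorem \ref{thm1} makes this possible) and applying Gr\"onwall yields $\sup_\tau\|u_n(\tau)\|_2^2+\int_s^t\|\nabla u_n\|_2^2\,d\tau\leq C(\|h\|_2,c_\delta(t))$. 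Testing instead with $u_n\psi_R^2$, where $\psi_R$ is a smooth cutoff vanishing on $\{|x|<R\}$, equal to $1$ on $\{|x|>2R\}$, with $|\nabla\psi_R|\leq C/R$, and using that $\sprt h$ is compact, gives the uniform $L^2$-tightness $\sup_n\big(\sup_\tau\int_{|x|>2R}|u_n(\tau)|^2+\int_s^t\int_{|x|>R}|\nabla u_n|^2\big)\to 0$ as $R\to\infty$. Finally, a crude bound of $\mathcal R_{n,m}$ via the form-bound of $b_m-b_n\in\mathbf{F}_{4\delta,2(g_n+g_m)}$, reinserted into the inequality above and absorbed (again using $\delta$ small), shows that $\sup_{\tau\in[s,t]}\|v(\tau)\|_r^r$ and $\int_s^t\|\nabla|v|^{r/2}\|_2^2\,d\tau$ are themselves bounded uniformly in $n,m$.

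The heart of the proof is to show $\mathcal R_{n,m}(t)\to0$. By Cauchy--Schwarz in $x$ and then in $\tau$,
\[
\mathcal R_{n,m}(\tau_0)\leq\Big(\int_s^t\|(b_m-b_n)|v|^{r/2}\|_2^2\,d\tau\Big)^{1/2}\Big(\int_s^t\langle|\nabla u_m|^2,|v|^{r-2}\rangle\,d\tau\Big)^{1/2},
\]
and by the previous step the second factor is $\leq M^{r-2}\int_s^t\|\nabla u_m\|_2^2\leq\mathrm{const}$. For the first factor I split $|v|^{r/2}=(1-\psi_R)|v|^{r/2}+\psi_R|v|^{r/2}$: the contribution of $(1-\psi_R)$ is $\leq M^r\int_s^t\int_{|x|<2R}|b_m-b_n|^2$, which tends to $0$ for each fixed $R$ because $b_n\to b$ in $L^2([0,t]\times K)$; the contribution of $\psi_R$ is, by the form-bound of $b_m-b_n$ applied to $f=\psi_R|v|^{r/2}$, at most $4\delta\int_s^t\|\nabla(\psi_R|v|^{r/2})\|_2^2+2\int_s^t(g_n+g_m)\|\psi_R|v|^{r/2}\|_2^2$, and using $r\geq 2$ (so $|v|^{r-2}\leq M^{r-2}$), the uniform bounds of the previous step, the tightness, $|\nabla\psi_R|\leq C/R$ and $\sup_n\int_0^tg_n\leq c_\delta(t)$, this is $\leq\omega(R)$ with $\omega(R)\to0$ as $R\to\infty$, uniformly in $n,m$. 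Choosing $R$ large and then $n,m$ large gives $\mathcal R_{n,m}(t)\to0$. With $\varepsilon_{n,m}:=\mathcal R_{n,m}(t)\to0$, the first-step inequality gives $\|v(\tau_0)\|_r^r\leq r\varepsilon_{n,m}+\frac1{\sqrt\delta}\int_s^{\tau_0}g_n\|v\|_r^r\,d\tau$ for all $\tau_0\in[s,t]$, so Gr\"onwall and $\sup_n\int_0^tg_n\leq c_\delta(t)$ give $\sup_{\tau\in[s,t]}\|u_n(\tau)-u_m(\tau)\|_r^r\leq r\,e^{c_\delta(t)/\sqrt\delta}\varepsilon_{n,m}\to0$, i.e.\ $\{u_n\}$ is Cauchy in $L^\infty([s,t],L^r)$ for $r\geq 2$.

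For the remaining range $\frac{2}{2-\sqrt\delta}<r<2$ (which occurs only when $\delta<1$), the test function $|v|^{r-2}v$ is inadmissible and, worse, $\langle|\nabla u_m|^2,|v|^{r-2}\rangle$ may diverge on the zero set of $v$; here I would replace it by $v\,(\varepsilon+v^2)^{(r-2)/2}$, carry out the same scheme with constants independent of $\varepsilon>0$ in all the ``good'' terms, and pass to the limit $\varepsilon\downarrow0$ (using Fatou's lemma and the uniform bound from the second step). The step I expect to be the real obstacle is the control of $\mathcal R_{n,m}$: the regularizations $b_n$ converge to $b$ only in $L^2_{\loc}$ and not in the form-sense, and a form-bounded field need not decay at infinity, so $b_m-b_n$ cannot be made globally small — its tail must be absorbed through form-boundedness against $\psi_R|v|^{r/2}$, which is exactly why the localized energy estimate and the resulting uniform tightness of the solutions have to be established first.
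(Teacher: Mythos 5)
Your strategy is genuinely different from the paper's and, under the standing hypotheses of Theorem \ref{thm2} (where the assumptions of Theorem \ref{thm1} force $\delta$ to be small, certainly $\delta<1$), it should go through: the absorption constant $\kappa=\frac{2(r(2-\sqrt\delta)-2)}{r^2}>0$ is exactly the role of $r>\frac{2}{2-\sqrt\delta}$, the near/far splitting of $\int_s^t\|(b_m-b_n)|v|^{r/2}\|_2^2$ correctly isolates where $L^2_{\loc}$-convergence of $b_n$ is used from where form-boundedness must take over, and your closing Gr\"onwall step is sound. The paper instead argues in two localized pieces: (a) a direct $L^r$ tail estimate for each $u_n$ using a cutoff $\zeta$ supported near infinity, where the commutator term $\langle\frac{|\nabla\zeta|^2}{\zeta^2},|v|^r\rangle$ is handled by the ``crucial estimate'' (Young's inequality against $\|u_n\|_\infty\le\|f\|_\infty$ and the volume growth of $\supp\nabla\zeta$), yielding $\partial_\tau\|v\|_r^r\le MR^{-\gamma}\|f\|_\infty^r+N\|v\|_r^r$ with no recourse to any global energy bound; and (b) on a fixed large ball, a Gr\"onwall inequality for $\zeta(u_n-u_m)$ whose inhomogeneity is controlled by $\int_0^t\langle\zeta|b_n-b_m|^2\rangle\to0$ together with a \emph{local} Caccioppoli bound $\int_s^t\langle\zeta|\nabla u_n|^2\rangle\le L(R)\|f\|_\infty^2$ obtained from $\|u_n\|_\infty\le\|f\|_\infty$ and $|b|\in L^2_{\loc}$ alone.

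The point where this difference matters is the range of $\delta$. The paper's proof of Claim \ref{cla1} is explicitly designed to work for all $\delta<4$ (see the first line of the proof and Remark 1 after Theorem \ref{thm2}, where this is advertised as a result of independent interest). Your argument cannot reach that range: both the global energy bound $\sup_\tau\|u_n(\tau)\|_2^2+\int_s^t\|\nabla u_n\|_2^2\,d\tau\le C$ and the $L^2$-gradient tightness $\sup_n\int_s^t\int_{|x|>R}|\nabla u_n|^2\to0$ are obtained by testing with $u_n$ (resp.\ $u_n\psi_R^2$) and absorbing the drift through the form-bound, which closes only when $\sqrt\delta<1$ (the $L^2$ energy identity carries the constant $\frac{2}{r'}\big|_{r=2}=1$ in front of $\|\nabla u_n\|_2^2$, while the drift contributes $\sqrt\delta\|\nabla u_n\|_2^2$ after optimizing the quadratic inequality; for $\delta\ge1$ there is nothing to absorb into). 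These global $L^2$ quantities enter your proof essentially — in the second Cauchy--Schwarz factor $\int_s^t\langle|\nabla u_m|^2,|v|^{r-2}\rangle\,d\tau\le M^{r-2}\int_s^t\|\nabla u_m\|_2^2\,d\tau$ and in the far-field term $\int_s^t\|\psi_R\nabla|v|^{r/2}\|_2^2$ — and there is no obvious substitute for them when $1\le\delta<4$, which is precisely why the paper replaces them with the $L^\infty$-based local Caccioppoli estimate and the $L^r$-level tail estimate. So: acceptable as a proof of what Theorem \ref{thm2} needs, but strictly weaker than what the paper's proof of Claim \ref{cla1} delivers.
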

\begin{proof}
In Claim \ref{cla1} we allow $\delta<4$, so we do not use the gradient bounds of Theorem \ref{thm1}. Without loss of generality we will suppose that $f = \Real f$, and so $u_n$ is real, and that $r$ is a rational number (so $u_n^{r-1}$ is well defined even if $u_n$ is sign changing).

$\mathbf{(a)}$. Let $k > 2.$ Define
$$
\eta(t):=\left\{
\begin{array}{ll}
0, & \text{ if } t< k, \\
\big( \frac{t}{k} - 1 \big)^k, & \text{ if } k \leq t \leq 2 k, \quad \quad \text{ and } \zeta(x) = \eta(\frac{|x-o|}{R}), \;\; R > 0.\\
1, & \text{ if } 2 k < t,
\end{array}
\right.
$$
Note that $ |\nabla \zeta | \leq R^{-1} \mathbf{1}_{\nabla \zeta} \zeta^{1-\frac{1}{k}}$. Here $\mathbf{1}_{\nabla \zeta}$ denotes the indicator of the support of $|\nabla\zeta|$.

 Set $v := \zeta u_n(\tau)$.  Clearly,
\[
\langle \zeta (\partial_\tau -\Delta + b_{n} \cdot\nabla ) u_n(\tau), v^{r-1} \rangle =0,
\]
\[
\langle (\partial_\tau -\Delta + b_{n} \cdot \nabla) v, v^{r-1} \rangle = \big\langle [-\Delta, \zeta]_-u_n + u_n b_{n} \cdot \nabla \zeta, v^{r-1} \big\rangle, 
\tag{$\star$} 
\]
where
\begin{align*}
\langle [-\Delta, \zeta]_-u_n, v^{r-1} \rangle & =\frac{2}{r^\prime} \big\langle \nabla v^\frac{r}{2}, u_n v^{\frac{r}{2}-1} \nabla \zeta \big\rangle - \langle \nabla \zeta, v^{r-1} \cdot \nabla u_n \rangle \\
& = \frac{2}{r^\prime} \big\langle \nabla v^\frac{r}{2},v^\frac{r}{2} \frac{\nabla \zeta}{\zeta} \big\rangle - \frac{2}{r}\big\langle \frac{\nabla \zeta}{\zeta}, v^\frac{r}{2} \nabla v^\frac{r}{2} \big\rangle + \big\langle \frac{|\nabla\zeta|^2}{\zeta^2}, v^r \big\rangle\\
&=\frac{2(r-2)}{r}\big\langle \nabla v^\frac{r}{2},v^\frac{r}{2} \frac{\nabla \zeta}{\zeta} \big\rangle + \big\langle \frac{|\nabla\zeta|^2}{\zeta^2}, v^r \big\rangle.
\end{align*}
By the quadratic estimates
\begin{align*}
\big\langle u_n b_{n} \cdot \nabla \zeta, v^{r-1} \big\rangle & = \big\langle b_{n} \cdot \frac{\nabla \zeta}{\zeta}, v^r \big\rangle \\
& \leq \frac{\mu \sqrt{\delta}}{r} \|\nabla v^\frac{r}{2}\|_2^2 + \frac{r \sqrt{\delta}}{4 \mu}\big\langle \frac{|\nabla\zeta|^2}{\zeta^2}, |v|^r \big\rangle + \frac{\mu g_n(\tau)}{r \sqrt{\delta}} \|v\|_r^r \;\;( \mu > 0 ),\\
\frac{2(r-2)}{r}\big\langle \nabla v^\frac{r}{2},v^\frac{r}{2} \frac{\nabla \zeta}{\zeta} \big\rangle & \leq \frac{\mu \sqrt{\delta}}{r}\|\nabla v^\frac{r}{2}\|_2^2 + \frac{(r-2)^2}{r \mu \sqrt{\delta}}\big\langle \frac{|\nabla\zeta|^2}{\zeta^2}, |v|^r \big\rangle,
\end{align*}
we get from $(\star)$
\[
\partial_\tau \|v\|_r^r + 2\bigg( \frac{2}{r^\prime} - (1 + \mu) \sqrt{\delta} \bigg) \|\nabla v^\frac{r}{2}\|_2^2 \leq \bigg(\frac{(r-2)^2}{\mu \sqrt{\delta}} + \frac{r^2 \sqrt{\delta}}{4 \mu} +r \bigg)\big\langle \frac{|\nabla\zeta|^2}{\zeta^2}, |v|^r \big\rangle + \frac{r + \mu}{\sqrt{\delta}} g_n(\tau) \|v\|_r^r.
\]
Recalling that $\frac{2}{r^\prime} > \sqrt{\delta},$ we can find $\mu > 0$ such that $\frac{2}{r^\prime} -(1 +\mu)\sqrt{\delta} \geq 0.$ Thus
\[
\partial_\tau \|v\|_r^r \leq \bigg(\frac{4 (r-2)^2 + r^2 \delta}{4 \mu \sqrt{\delta}} +r \bigg)\big\langle \frac{|\nabla\zeta|^2}{\zeta^2}, |v|^r \big\rangle + \frac{r + \mu}{\sqrt{\delta}} g_n(\tau) \|v\|_r^r \tag{$\star\star$}
\]
Next, $\big\langle \frac{|\nabla\zeta|^2}{\zeta^2}, |v|^r \big\rangle \leq R^{-2} \| \mathbf 1_{\nabla \zeta} \zeta^{-2 \theta} |v|^r \|_1,$ $\theta := k^{-1}$. Since $\|u_n\|_\infty \leq \|f\|_\infty, \; \|\mathbf 1_{\nabla \zeta}\|_\frac{r}{2\theta} \leq c(d,\theta) R^\frac{2\theta d}{r},$ and
\[
\| \mathbf 1_{\nabla \zeta} \zeta^{-2 \theta} |v|^r \|_1 \leq \|\mathbf 1_{\nabla \zeta}u_n^{2 \theta} \|_\frac{r}{2\theta} \|v\|_r^{r-2\theta}\leq \|\mathbf 1_{\nabla \zeta} \|_\frac{r}{2 \theta} \|u_n\|_\infty^{2\theta} \|v\|_r^{r-2\theta},
\]
we obtain, using the Young inequality, the crucial estimate (\textit{on which the whole proof rests})
\[
\big\langle \frac{|\nabla\zeta|^2}{\zeta^2}, |v|^r \big\rangle \leq \frac{2 \theta}{r} [c(d)]^\frac{r}{2 \theta} R^{d-\frac{r}{\theta}} \|f\|_\infty^r + \frac{r-2\theta}{r} \|v\|_r^r.
\]
Fix $\theta$ by $\theta = \frac{r}{d+2r}.$ Now from $(\star\star)$ we obtain the inequality
\[
\partial_\tau\|v\|_r^r \leq M(r, d, \delta) R^{-\gamma} \|f\|_\infty^r + N(r, d, \delta) \|v\|_r^r, \;\; \gamma = \frac{r}{\theta}-d > 0, \tag{$\star\star\star$}
\]
from which we conclude that, for a given $\hat{\varepsilon} > 0$ there is $R$ such that $\sup_{\tau \in [s,t], n} \|\zeta u_n(\tau)\|_r \leq \frac{\hat{\varepsilon}}{2}$, and so
\[
\sup_{\tau \in [s, t], \; n, m \geq 1} \|(1_{B^c(o, 2 k R)})(u_n(\tau) - u_m(\tau))\|_r < \hat{\varepsilon}.
\]

\medskip

 $\mathbf{(b)}$. Let $k>2$. Define

$$
\eta(t):=\left\{
\begin{array}{ll}
1, & \text{ if } t< 2 k, \\
\big( 1 - \frac{1}{k} (t - 2 k) \big)^k, & \text{ if } 2 k \leq t \leq 3 k, \quad \text{and } \zeta(x) := \eta(\frac{|x-o|}{R}), \;\; R > 0.\\
0, & \text{ if } 3 k < t,
\end{array}
\right.
$$

Set $h:=u_m-u_n$. Clearly, for $r$ rational and $v=\zeta h(\tau)$,
\[
\langle(\partial_\tau h - \Delta h+b_m\cdot\nabla h),\zeta v^{r-1}\rangle=F,
\]
\[
\partial_\tau\|v\|_r^r + 4(r')^{-1}\|\nabla v^\frac{r}{2}\|_2^2 +2\langle b_{m}v^\frac{
r}{2},\nabla v^\frac{r}{2}\rangle = r F,\quad r'=\frac{r}{r-1},
\]
where
\[
F=\langle [-\Delta,\zeta]_-h,v^{r-1}\rangle+\langle (b_{n}-b_{m})\cdot\nabla u_n,\zeta v^{r-1}\rangle +\langle b_{m}\cdot\nabla \zeta,h v^{r-1}\rangle,
\]
\[
\langle [-\Delta, \zeta]_- h , v^{r-1} \rangle = \frac{2(r-2)}{r} \big\langle \nabla v^\frac{r}{2},v^\frac{r}{2} \frac{\nabla\zeta}{\zeta} \big\rangle + \big\langle \frac{|\nabla\zeta|^2}{\zeta^2}, v^r \big\rangle,
\]
\[
\big\langle \nabla v^\frac{r}{2},v^\frac{r}{2} \frac{\nabla\zeta}{\zeta} \big\rangle \leq \|\nabla v^\frac{r}{2}
\|_2\big\langle \frac{|\nabla\zeta|^2}{\zeta^2},|v|^r\big\rangle^\frac{1}{2},
\]
\[
\langle b_{m}\cdot\nabla \zeta, h v^{r-1}\rangle = \big\langle b_{m}v^\frac{r}{2}\cdot\frac{\nabla \zeta}{\zeta},v^\frac{r}{2}\big\rangle \leq \|b_{m}v^\frac{r}{2}\|_2 \big\langle \frac{|\nabla \zeta|^2}{\zeta^2},|v|^r\big\rangle^\frac{1}{2},
\]
\[
\|b_{m}v^\frac{r}{2}\|_2^2\leq \delta \|\nabla v^\frac{r}{2}\|_2^2 + g_m\|v\|_r^r.
\]
Using these estimates and fixing $\epsilon>0$ by $2r'^{-1}-(1+\epsilon)\sqrt{\delta}\geq 0$, we have
\[
\partial_\tau\|v\|_r^r+2\big(2r'^{-1}-(1+\epsilon)\sqrt{\delta}\big)\|\nabla v^\frac{r}{2}\|_2^2\leq \big(\frac{(r-2)^2}{\epsilon r}+\frac{r}{4\epsilon}+ r\big)\big\langle\frac{|\nabla \zeta|^2}{\zeta^2}|v|^r\big\rangle + (\epsilon+2)g_m\|v\|_r^r +rF_1,
\]
\[F_1=\langle\zeta|b_{n}-b_{m}|^2\rangle^\frac{1}{2}\langle \zeta |\nabla u_n|^2,  |v|^{2(r-1)}\rangle^\frac{1}{2}.
\]
Again using the estimate $\big\langle \frac{|\nabla \zeta|^2}{\zeta^2}|v|^r\big\rangle\leq MR^{-\gamma}\|f\|_\infty^r + N\|v\|_r^r$, $\gamma>0$,  and setting $\mu_\tau =N C\tau+(\epsilon+2)\int_s^\tau g_n(\nu)d\nu$, where $C=C(r,\delta)=\frac{(r-2)^2}{\epsilon r}+\frac{r}{4\epsilon}+ r$, we obtain that
\[
e^{-\mu_t}\|v(t)\|_r^r\leq \|v(s)\|_r^r + MCR^{-\gamma}\|f\|_\infty^r\int_s^te^{-\mu_\tau} d\tau +r\int_s^te^{-\mu_\tau} F_1(\tau)d\tau,
\]
\[
\|v(t)\|_r^r\leq MCR^{-\gamma}\|f\|_\infty^r e^{\mu_t}t +e^{\mu_t}r\int_s^t F_1(\tau)d\tau,
\]
\[
\int_s^t F_1(\tau)d\tau\leq \bigg(\int_0^t\langle\zeta|b_{n}-b_{m}|^2\rangle d\tau\bigg)^\frac{1}{2}\bigg(\int_s^t\langle\zeta|\nabla u_n|^2\rangle d\tau\bigg)^\frac{1}{2}(2\|f\|_\infty)^{r-1}.
\]
We estimate $\int_s^t \langle \zeta |\nabla u_n|^2 \rangle d \tau$ as follows. Note that $\langle \partial_\tau u_n -\Delta u_n + b_{n} \cdot \nabla u_n, \zeta u_n \rangle = 0$, and so
\[
\frac{1}{2} \partial_\tau \langle \zeta u_n^2 \rangle + \langle \zeta |\nabla u_n|^2 \rangle + \langle \nabla u_n, u_n \nabla \zeta \rangle + \langle b_{n} \cdot \nabla u_n, \zeta u_n \rangle = 0,
\]
\[
\partial_\tau \langle \zeta u_n^2 \rangle + \langle \zeta |\nabla u_n|^2 \rangle \leq 2 \big(\big \langle \frac{|\nabla\zeta|^2}{\zeta} \big \rangle + \langle \zeta |b_n|^2 \rangle \big) \|f\|^2_\infty,
\]
\begin{align*}
\int_s^t \langle \zeta |\nabla u_n|^2 \rangle d \tau & \leq \|\zeta u_n(s) \|_2^2  + \bigg(2 t \big\langle \frac{(\nabla \zeta)^2}{\zeta} \big \rangle + \int_0^t\langle \zeta |b_n|^2 \rangle d\tau\bigg) \|f\|^2_\infty \\
& \leq L_0(R)\| f \|_\infty^2  + (t+1) L(R) \|f\|_\infty^2, \quad L_0(R)=\|\zeta\|_2^2.
\end{align*}
(Note that $\int_0^t\langle \zeta |b_n|^2 \rangle d\tau\leq \int_0^t\langle \zeta |b|^2 \rangle d\tau+1$ for all large $n$.)

Thus, we arrived at
\begin{align*}
&\|v(t)\|_r^r\leq MCR^{-\gamma}\|f\|_\infty^r e^{\mu_t}t\\
+& \bigg(L_0(R)  + (t+1) L(R)\bigg)^\frac{1}{2}2^{r-1}\|f\|_\infty^r e^{\mu_t}r\bigg(\int_0^t\langle\zeta|b_{n}-b_{m}|^2\rangle d\tau\bigg)^\frac{1}{2}.
\end{align*}
By the definition of $b_{n}$, $\lim_{n,m}\int_0^t\langle\zeta|b_{n}-b_{m}|^2\rangle d\tau=0$, and hence for given $\hat{\epsilon}>0$ and $R=R(\hat{\epsilon})<\infty$ there is a number $P<\infty$ such that 
\[
\sup_{\tau \in [s, t], n, m \geq P}\|1_{B(o, 2 k R)}(u_n(\tau) - u_m(\tau))\|_r <  \frac{1}{2} \hat{\varepsilon}+\frac{1}{2} \hat{\varepsilon}=\hat{\varepsilon}.
\]
\end{proof}

\begin{claim}
\label{cla2}
$\{u_n\}$ is a Cauchy sequence in $L_{\infty,\infty}$.  
\end{claim}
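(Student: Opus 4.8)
The plan is to derive Claim \ref{cla2} from Claim \ref{cla1} via a local boundedness estimate of De Giorgi--Nash--Moser type. Precisely, writing $h:=u_n-u_m$, which solves
\[
(\partial_\tau-\Delta+b_m\cdot\nabla)h=F_{n,m},\qquad F_{n,m}:=(b_m-b_n)\cdot\nabla u_n,\qquad h(s)=0,
\]
and satisfies $\|h\|_{L^\infty([s,t]\times\mathbb R^d)}\leq 2\|f\|_\infty$, I would prove an inequality of the form
\[
\|h\|_{L^\infty([s,t]\times\mathbb R^d)}\leq B\,\|h\|_{L^r([s,t]\times\mathbb R^d)}^\gamma+\varepsilon_{n,m},\qquad B<\infty,\ \gamma>0,\ r>\tfrac{2}{2-\sqrt\delta},
\]
with $B$ and $\gamma$ independent of $n,m$ and with $\varepsilon_{n,m}\to 0$ as $n,m\to\infty$. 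Granting this, Claim \ref{cla1} gives $\|h\|_{L^\infty([s,t],L^r)}\to 0$, hence $\|h\|_{L^r([s,t]\times\mathbb R^d)}\to 0$, so the right-hand side tends to $0$; as the interval $[s,t]$ is arbitrary, this is exactly the assertion of Claim \ref{cla2}.

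To obtain the displayed inequality I would run a parabolic Moser iteration. For $p\geq r$, test the equation for $h$ with $\zeta^2|h|^{p-2}h$, where $\zeta$ is a spatial cut-off of exactly the two types used in the two parts of Claim \ref{cla1}, localizing to a ball or to its complement and thereby reducing everything to the region where $b_n\to b$ in $L^2$. Using form-boundedness of $b_m$ one arrives at a differential inequality for $\|\zeta|h|^{p/2}\|_2^2$. The decisive structural point is that the drift term $\langle b_m\cdot\nabla h,\zeta^2|h|^{p-2}h\rangle$ is absorbed---up to the commutator terms with $\nabla\zeta$, handled by the Young-inequality device with the $R^{-\gamma}$ gain exactly as in Claim \ref{cla1}---by the good term $(p-1)\langle|\nabla h|^2,\zeta^2|h|^{p-2}\rangle$: after passing to $\nabla|h|^{p/2}$ this requires $2-\tfrac2p>\sqrt\delta$, which holds for every $p\geq r$ precisely because $r>\tfrac{2}{2-\sqrt\delta}$ and $p\mapsto 2-\tfrac2p$ is increasing. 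Combined with the Sobolev inequality $\||h|^{p/2}\zeta\|_{2j}^2\lesssim\|\nabla(|h|^{p/2}\zeta)\|_2^2+(\text{lower order})$, $j=\tfrac{d}{d-2}$, this upgrades an $L^p$ bound on $h$ to an $L^{pj}$ bound (in the appropriate mixed space--time norm), and iterating along $p_k=rj^k$, with step-$k$ constants growing only polynomially in $p_k$, lets $k\to\infty$.

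The forcing $F_{n,m}$ has to be carried through the iteration, and here Theorem \ref{thm1} is essential: its estimate $\sup_{\tau}\|\nabla u_n(\tau)\|_q^q+C_1\int_s^t\|\nabla u_n(\tau)\|_{qj}^q\,d\tau\leq C$, uniform in $n$, together with parabolic interpolation, places $\nabla u_n$ in a space strictly better than $L^2([s,t]\times\mathbb R^d)$; since $b_n-b_m\to 0$ in $L^2([s,t]\times K)$ for every compact $K$ (and the cut-offs of Claim \ref{cla1} reduce matters to such $K$), Hölder's inequality yields $F_{n,m}\to 0$ in $L^{\sigma}([s,t]\times K)$ for an appropriate $\sigma>1$. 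The contribution $\int_s^t\langle\zeta^2|F_{n,m}|,|h|^{p-1}\rangle\,d\tau$ is then bounded by Hölder in $(\tau,x)$ followed by interpolation of the $|h|$-factors between the quantities already controlled on the left-hand side and a lower $L^r$-norm (the uniform bound $|h|\leq 2\|f\|_\infty$ shortens this step), so that a small fraction of the good terms absorbs it while the remainder enters the final inequality through $\|F_{n,m}\|_{L^\sigma([s,t]\times K)}$ to a positive power; this is the origin of $\varepsilon_{n,m}$.

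I expect the main obstacle to be precisely the bookkeeping in the iteration, as flagged in remark 2 after Theorem \ref{thm2}: one must choose the cylinders (equivalently, the cut-offs) and track how the lower-order terms---the $R^{-\gamma}$ terms from the spatial cut-offs, the base $L^r$-norm, and the residual forcing---propagate, so that in the limit $k\to\infty$ the exponent $\gamma$ of $\|h\|_{L^r}$ is \emph{strictly positive} while at the same time the accumulated forcing contribution still tends to $0$ as $n,m\to\infty$. These requirements pull against one another and against letting $p_k\to\infty$, and arranging all of them simultaneously is the delicate part of the argument.
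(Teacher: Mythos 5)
Your overall strategy (climb from the $L^r$ control of Claim \ref{cla1} to $L^\infty$ by a Moser-type iteration) is the paper's, but your treatment of the forcing term $F_{n,m}=(b_m-b_n)\cdot\nabla u_n$ diverges from the paper's, and this is where a genuine gap lies. You want $F_{n,m}\to 0$ in some $L^\sigma$, which forces you to localize, since $b_n\to b$ only in $L^2([0,t]\times K)$ for \emph{compact} $K$. Your assertion that the two cut-offs of Claim \ref{cla1} ``reduce everything to the region where $b_n\to b$ in $L^2$'' fails for the exterior cut-off: on the complement of a large ball $b_n-b_m$ is not small in any sense, so there the forcing cannot be treated as a vanishing perturbation. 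In the exterior you would instead need $\sup_n\|u_n\|_{L^\infty([s,t]\times\{|x|\geq\rho\})}$ to be small for large $\rho$; Claim \ref{cla1}(a) gives only $L^r$-smallness of $\zeta u_n$ there, and upgrading that to $L^\infty$ requires a further iteration for the homogeneous equation in the exterior, with its own cut-off bookkeeping. None of this is sketched, and it is not a routine omission.

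The paper avoids all of this by never using the smallness of $b_n-b_m$ inside the iteration. It bounds $|\langle(b_n-b_m)\cdot\nabla u_n,\, h|h|^{r-2}\rangle|$ by terms involving $|b_n|+|b_m|$, absorbs those into the good gradient term via form-boundedness, and is left with $\eta^{-1}\||v|^{1-\frac{2}{r}}\nabla u_n\|_2^2$, where $v=h|h|^{\frac{r-2}{2}}$. H\"{o}lder with exponent $x=q/2$ together with the global bound $\sup_\tau\|\nabla u_n(\tau)\|_q\leq C$ of Theorem \ref{thm1} turns this into a constant times $\|h\|^{r-2}_{x'(r-2)}$, i.e.\ a norm of $h$ itself at a \emph{lower} exponent. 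So the inhomogeneity is not an error term to be sent to zero: it is precisely the term that connects level $rj$ to level $x'(r-2)$ and drives the iteration $r_0\to r_1\to\cdots$, globally on $\mathbb R^d$, with no additive $\varepsilon_{n,m}$ and with the multiplicative structure that keeps the final exponent $\gamma=\inf_n\gamma_n$ strictly positive. The convergence $b_n\to b$ enters only at the base of the iteration, through Claim \ref{cla1}. To salvage your version you must either supply the uniform-in-$n$ exterior $L^\infty$ decay of $u_n$, or adopt the paper's device of folding $F_{n,m}$ into the iteration via form-boundedness.
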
 

Here by $L_{p,r}=L_{p,r}([s,t]\times\mathbb R^d)$ we denote the Banach space of measurable functions on $[s,t]\times\mathbb R^d$ having finite norm
\[
\|v\|_{p,r}:=\bigg(\int_s^t\|v(\tau)\|_r^pd\tau\bigg)^\frac{1}{p}, \quad\|v\|_{\infty,\infty}:=\sup_{\tau\in [s,t]}\|v(\tau)\|_\infty.
\]

\medskip

\begin{proof} \textbf{1}.~Again, first we allow $\delta<4$. Note that $h(\tau)=u_m(\tau)-u_n(\tau)$ satisfies the identity
\[
(\partial_\tau -\Delta + b_m \cdot \nabla) h = (b_{n} - b_{m}) \cdot \nabla u_n,\quad h(s)=0.
\]
Multiplying the identity by $h |h|^{r-2}$, $r>\frac{2}{2-\sqrt{\delta}}$ and integrating by parts, we obtain
\[
\frac{1}{r}\partial_\tau \|v\|_2^2 + \frac{4}{r r^\prime} \|\nabla v\|_2^2 + \frac{2}{r}\Real \langle b_{m} \cdot \nabla v,v \rangle = \Real\langle (b_{n} - b_{m}) \cdot \nabla u_n, v |v|^{1-\frac{2}{r}} \rangle,
\]
where $v =h |h|^\frac{r-2}{2}$.
Now, using the quadratic estimates and the definition of class $\mathbf F_\delta$, we have 
\begin{align*}
|\langle b_{m} \cdot \nabla v,v \rangle| & \leq \varepsilon \|b_{m} v \|_2^2 + (4\varepsilon)^{-1} \| \nabla v \|_2^2\\
&\leq (\varepsilon \delta + (4 \varepsilon)^{-1} ) \|\nabla v \|_2^2 + \varepsilon g_m(\tau) \|v\|_2^2\\
& = \sqrt{\delta} \|\nabla v \|_2^2 +(2 \sqrt{\delta})^{-1} g_m(\tau) \|v\|_2^2 \quad (\varepsilon = (2 \sqrt{\delta})^{-1}, )
\end{align*}
and
\begin{align*}
|\langle (b_{n} - b_{m}) \cdot \nabla u_n, v |v|^{1-\frac{2}{r}} \rangle| & \leq \langle (|b_{n}| + |b_{m}|) |v|, |v|^{1-\frac{2}{r}} | \nabla u_n | \rangle \\
& \leq \eta \delta \|\nabla v \|_2^2  + \eta^{-1} \| |v|^{1-\frac{2}{r}} \nabla u_n \|_2^2 + \eta \frac{g_m(\tau)+g_n(\tau)}{2} \|v\|_2^2\quad (\eta>0),
\end{align*}
and hence obtain the inequality
\begin{align*}
\frac{1}{r}\partial_\tau \|v\|_2^2 & + \bigg( \frac{4}{r r^\prime} -\frac{2}{r}\sqrt{\delta} -\eta \delta \bigg) \|\nabla v\|_2^2 \\
& \leq  \eta^{-1}\||v|^{1-\frac{2}{r}} \nabla u_n\|_2^2 +  \big( r\sqrt{\delta})^{-1}g_m(\tau)+\eta\frac{g_m(\tau)+g_n(\tau)}{2}\big) \|v\|_2^2.
\end{align*}
Since $r > \frac{2}{2 -\sqrt{\delta}} \Leftrightarrow \frac{2}{r^\prime} - \sqrt{\delta} > 0,$ we can choose $k > 2$ so large that $$\frac{4}{r r^\prime} - \frac{2}{r} \sqrt{\delta} = \frac{2}{r} \big(\frac{2}{r^\prime} - \sqrt{\delta} \big)= 2 r^{-k+1}.$$
Fix $\eta$ by $$\eta \delta = \frac{4}{r r^\prime} - \frac{2}{r} \sqrt{\delta} - r^{-k+1} \;\; ( = r^{-k+1} ).$$  Thus
\begin{align*}
\partial_\tau \|v\|_2^2 & + r^{-k} \| \nabla v \|_2^2 \\
& \leq \delta r^{k-1} \| |v|^{1-\frac{2}{r}} \nabla u_n \|_2^2 +(\delta^{-\frac{1}{2}}g_m(\tau)+\delta^{-1}r^{-k+2}(g_m(\tau)+ g_n(\tau)\big) \|v\|_2^2.
\end{align*}
So, multiplying this inequality by $e^{-\mu_\tau}$, $\mu_\tau:=(\delta^{-\frac{1}{2}}+\delta^{-1})\int_s^\tau (g_m(s')+g_n(s'))ds'$, integrating over $[s,t]$, and then using the inequality $$\mu_\tau\leq \bar{\mu}_t:=2(\delta^{-\frac{1}{2}}+\delta^{-1})c_\delta(t)$$ we obtain
\[
\sup_{s \leq \tau\leq t}\|v(\tau)\|_2^2  + r^{-k}\int_s^t \| \nabla v(\tau) \|_2^2 d\tau  \leq r^ke^{\bar{\mu}_t} \int_s^t \| |v|^{1-\frac{2}{r}}(\tau) \nabla u_n(\tau) \|_2^2 d\tau. 
\]
 From the last inequality we obtain, using uniform Sobolev inequality $c_d^{-1} \|v \|^2_{2 j} \leq \| \nabla v \|_2^2$ and H\"older's inequality:
\begin{align*}
c_d r^k \sup_{s\leq \tau \leq t}\|v(\tau)\|_2^2 + &\int_s^t \|v(\tau) \|_{2j}^2 d\tau \leq c_d r^{2k} e^{\bar{\mu}_t} \int_s^t \| |v|^{1-\frac{2}{r}} \nabla u_n \|_2^2 d\tau\\
&\leq c_dr^{2k} e^{\bar{\mu}_t}\int_s^t \|\nabla u_n\|_{2x}^2 \||v|^{1-\frac{2}{r}}\|_{2x'}^2 d\tau, \quad x>1, \quad x':=\frac{x}{x-1}.
\end{align*}
\textbf{2}. Now let $d$, $\delta$ and $q>d$ satisfy the assumptions of Theorem \ref{thm1}. Thus
$$
\sup_{s\leq\tau\leq t}\|\nabla u(\tau)\|_q^2\leq e^{2C_2q^{-1}c_\delta(t)} \|\nabla u(s)\|_q^2.
$$
Selecting $x:=\frac{q}{2}$ and putting $C_3=4\delta^{-1}c_\delta +2C_2q^{-1}$, we obtain
\[
c_dr^k \|h\|_{\infty,r}^r + \|h\|^r_{r,rj}\leq c_dr^{2k}e^{C_3c_\delta(t)} \|\nabla u(s)\|_q^2 \int_s^t\|h\|^{r-2}_{x'(r-2)} d\tau.
\]
Set $D:=c_d e^{C_3c_\delta(t)} \|\nabla u(s)\|_q^2$. Then the last inequalities take the form
\[
c_dr^k\|h\|^r_{\infty,r} +  \|h\|^r_{r,rj}\leq Dr^{2k} \|h\|^{r-2}_{r-2,x'(r-2)}. \tag{$\star$}
\]
Let us use first H\"older and then Young inequalities:
\[
\|h\|^r_{\frac{r}{1-\beta},\frac{rd}{d-2+2\beta}}\leq \|h\|^{\beta r}_{\infty,r}\|h\|_{r,rj}^{(1-\beta)r}\leq \beta \|h\|^r_{\infty,r}+(1-\beta)\|h\|_{r,rj}^r, \quad 0<\beta<1.
\]
Therefore, we obtain from $(\star)$ the inequalities
\[
\|h\|_{\frac{r}{1-\beta},\frac{rd}{d-2+2\beta}}\leq D^\frac{1}{r}(r^\frac{1}{r})^{2k}  \|h\|^{1-\frac{2}{r}}_{r-2, x'(r-2)}. 
\]
Let $d\geq 5$, $\sqrt{\delta}=d^{-1}$ and $q=d+1$.
Define $\beta =\frac{2}{d^2+d+2}$, $j_1=\frac{d}{d-2+2\beta}$ and $\mathfrak{t}=\frac{j_1}{x'}$. Then $\mathfrak{t}=\frac{1}{1-\beta}$. In other cases we 
select $\beta\in ]0,q-d]$ such that $\mathfrak{t}=\frac{1}{1-\beta}$.
Thus,
\[
\|h\|_{\mathfrak{t}r,j_1r}\leq D^\frac{1}{r}(r^\frac{1}{r})^{2k}  \|h\|^{1-\frac{2}{r}}_{r-2, x'(r-2)}.
\]
Fix $r_0>\frac{2}{2-\sqrt{\delta}}$.
Successively setting $x'(r_1-2)=r_0$, $x'(r_2-2)=j_1r_1$, $x'(r_3-2)=j_1r_2,\dots$, so that $$r_n = (\mathfrak{t} -1)^{-1} \bigg( \mathfrak{t}^n \big( \frac{r_0}{x^\prime} +2\big) - \mathfrak{t}^{n-1} \frac{r_0}{x^\prime} -2 \bigg),$$ we obtain from the last inequality that
\[
\|h\|_{\mathfrak{t}r_n,j_1r_n}\leq D^{\alpha_n}\Gamma_n\|h\|^{\gamma_n}_{\frac{r_0}{x'},r_0},
\]
where
\begin{align*}
\alpha_n = &\frac{1}{r_1}\bigg(1 - \frac{2}{r_2} \bigg) \bigg(1 - \frac{2}{r_3} \bigg) \dots \bigg(1 - \frac{2}{r_n} \bigg) + \frac{1}{r_2}\bigg(1 - \frac{2}{r_3} \bigg) \bigg(1 - \frac{2}{r_4} \bigg)\dots \bigg(1 - \frac{2}{r_n} \bigg)\\
& + \dots + \frac{1}{r_{n-1}} \bigg(1 - \frac{2}{r_n} \bigg) + \frac{1}{r_n} ;\\
\gamma_n = & \bigg(1 - \frac{2}{r_1} \bigg) \bigg(1 - \frac{2}{r_2} \bigg) \dots \bigg(1 - \frac{2}{r_n} \bigg) ;\\
\Gamma_n =& \bigg[r_n^{r_n^{-1}} r_{n-1}^{r_{n-1}^{-1}(1-2 r_n^{-1})} r_{n-2}^{r_{n-2}^{-1} (1-2 r_{n-1}^{-1}) (1-2 r_n^{-1})} \dots r_1^{r_1^{-1} (1-2 r_2^{-1}) \dots (1- 2 r_n^{-1}) } \bigg]^{2k} .
\end{align*}
Since $\alpha_n = (\mathfrak{t}^n - 1)r_n^{-1}(\mathfrak{t}-1)^{-1}$ and $\gamma_n = r_0 \mathfrak{t}^{n-1}(x^\prime r_n)^{-1},$
\[
\alpha_n \leq \alpha \equiv \bigg( \frac{r_0}{x^\prime} +2 -  \frac{r_0}{j_1} \bigg)^{-1}=\frac{j_1}{r_0}\bigg(\mathfrak{t}-1+2\frac{j_1}{r_0}\bigg)^{-1}, 
\]
and
\[
\inf_n \gamma_n > \gamma = \frac{r_0}{x'}\big(\frac{r_0}{x'}+\frac{2\mathfrak{t}}{\mathfrak{t}-1}\big)^{-1} > 0, \quad \quad \sup_n \gamma_n < 1.
\]

Also, since
$$
\Gamma_n^\frac{1}{2 k} = r_n^{r_n^{-1}} r_{n-1}^{\mathfrak{t} r_n^{-1}} r_{n-2}^{\mathfrak{t}^2 r_n^{-1}} \dots r_1^{\mathfrak{t}^{n-1}r_n^{-1}}
$$
and $
b \mathfrak{t}^n \leq r_n \leq a \mathfrak{t}^n$, where $a= r_1(\mathfrak{t}-1)^{-1}, \; b = r_1 \mathfrak{t}^{-1},$ we have
\begin{align*}
\Gamma_n^\frac{1}{2 k} & \leq (a \mathfrak{t}^n)^{(b\mathfrak{t}^n)^{-1}} (a \mathfrak{t}^{n-1})^{(b\mathfrak{t}^{n-1})^{-1}} \dots (a \mathfrak{t})^{(b \mathfrak{t})^{-1}} \\
& = \bigg[ a^{(1 -\mathfrak{t}^{-n})(\mathfrak{t}-1)^{-1}} \mathfrak{t}^{\sum_{i=1}^n i \mathfrak{t}^{-i}} \bigg]^\frac{1}{b} \leq \bigg[ a^{(\mathfrak{t}-1)^{-1}} \mathfrak{t}^{\mathfrak{t}(\mathfrak{t}-1)^{-2}} \bigg]^\frac{1}{b}.
\end{align*}
Finally, note that $\|h\|_{r_0,r_0} \to 0$ as $n, m \rightarrow \infty$, and so $\|h\|_{\frac{r_0}{x'},r_0}^{\gamma_n} \leq (t-s)^{\frac{\gamma_n}{r_0(x-1)}} \|h\|_{r_0,r_0}^\gamma$ for all large $n, m$. 

Define $\nu(\tau)=\tau^{\frac{\gamma}{r_0(x-1)}}$ if $0<\tau\leq 1$ and $\tau^{\frac{1}{r_0(x-1)}}$ if $\tau>1$.

Therefore, we conclude that there are constants $B<\infty$ and $\gamma>0$ such that the following inequality is valid
\[
\|h\|_{\infty,\infty}\leq B(t-s) \|h\|_{r_0,r_0}^\gamma, \quad B(t-s)=B\nu(t-s)e^{\alpha C_3\|g^{\prime\prime}\|_\infty t}.
\]
\end{proof}
To end the proof of Theorem \ref{thm2} we note that $\|h\|_{L^{r_0}([s,t] \times \mathbb R^d)}\rightarrow 0$ uniformly in $s \in [0,t]$ according to Claim \ref{cla1}.

\bigskip

\end{document}